\newfont{\blb}{msbm10 scaled\magstep1}
\newtheorem*{HConj}{Hendry's Conjecture}
\newcommand{\Z}{\mathbb{Z}}
\newtheorem{lem}{Lemma}[section]
\newtheorem{cor}[lem]{Corollary}
\newtheorem{thm}[lem]{Theorem}
\newtheorem{ques}[lem]{Question}
\newtheorem{prop}[lem]{Proposition}
\newtheorem{prob}[lem]{Problem}
\author{Manuel Lafond\affiliationmark{1}
  \and Ben Seamone\affiliationmark{2,3}\thanks{Funded by Fonds de recherche du Qu\'ebec – Nature et technologies grant no.~185200 and Natural Sciences and Engineering Research Council of Canada grant no.~6790-421798-2012}
  \and Rezvan Sherkati\affiliationmark{4}}
\title[Further results on Hendry's Conjecture]{Further results on Hendry's Conjecture}
\affiliation{
  D\'epartement d'informatique, Universit\'e de Sherbrooke, Canada\\
  Department d'informatique et de recherche op\'erationnelle, Universit\'e de Montr\'eal, Canada\\
  Department of Mathematics, Dawson College, Montreal, Canada \\
  Department of Electrical and Computer Engineering, McGill University, Montr\'eal, Canada}
\keywords{Chordal graphs, strongly chordal graphs, Hamiltonian graphs, cycle extendable graphs}
\begin{document}
\publicationdetails{24}{2022}{2}{1}{6700}

\maketitle

\begin{abstract}
Recently, a conjecture due to Hendry which stated that every Hamiltonian chordal graph is cycle extendable was disproved.  Here we further explore the conjecture, showing that it fails to hold even when a number of extra conditions are imposed.  In particular, we show that Hendry's Conjecture fails for strongly chordal graphs, graphs with high connectivity, and if one relaxes the definition of ``cycle extendable'' considerably.  We also consider the original conjecture from a sub-tree intersection model point of view, showing that a result of Abuieda et al.~is nearly best possible.
\end{abstract}

\section{Introduction}

All graphs considered here are simple, finite, connected, and undirected.  A {\em Hamiltonian cycle} is a cycle of a graph that contains every vertex; a graph that contains a Hamiltonian cycle is called {\em Hamiltonian}.  
 A graph $G$ on $n$ vertices is {\em pancyclic} if $G$ contains a cycle of length $m$ for every integer $3 \leq m \leq n$.  
 A graph $G$ is {\em cycle extendable} if, for every non-Hamiltonian cycle $C$, there exists a cycle $C'$ such that $V(C) \subset V(C')$ and $|V(C')| = |V(C)| + 1$ (we say that $C$ {\em extends} to $C'$).
If, in addition, every vertex of $G$ is contained in a triangle, then $G$ is {\em fully cycle extendable}.

A graph $H$ is an \emph{induced subgraph} of $G$ if $H$ can be obtained from $G$ by deleting vertices and all edges incident to these vertices.  The remaining vertices are said to induce $H$.  If no induced subgraph of $G$ is isomorphic to $H$, $G$ is said to be \textit{$H$-free}.  A graph is {\em chordal} if it is $C_k$-free for every integer $k \geq 4$; that is, every cycle of length $4$ or greater has a chord.
A graph is {\em strongly chordal} if it is chordal and every even cycle of length at least $6$ has a chord that connects vertices at an odd distance from one another along the cycle.

The results in this paper are motivated by the following conjecture:

\begin{HConj}\cite{H90}
If $G$ is a Hamiltonian chordal graph, then $G$ is fully cycle extendable.
\end{HConj}

In \cite{LS14}, the first two authors answered Hendry's Conjecture in the negative:

\begin{thm}
For any $n \geq 15$, there exists a Hamiltonian chordal graph $G$ on $n$ vertices which is not fully cycle extendable.
\end{thm}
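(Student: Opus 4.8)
The plan is to refute the conjecture by exhibiting, for each $n \ge 15$, a single Hamiltonian chordal graph together with one non-Hamiltonian cycle that does not extend. The starting point is the correct reading of ``extends'': a non-Hamiltonian cycle $C$ extends if and only if there is a vertex $v \notin V(C)$ such that the induced subgraph $G[V(C)\cup\{v\}]$ contains a spanning cycle, and crucially this spanning cycle need not reuse any edge of $C$. Hence $C$ is non-extendible exactly when $G[V(C)\cup\{v\}]$ is non-Hamiltonian for \emph{every} external vertex $v$. I would also note at the outset that in any Hamiltonian chordal graph every vertex lies in a triangle (a vertex with an independent neighbourhood would close a shortest path between two of its neighbours into a chordless cycle of length at least $4$), so for such graphs ``fully cycle extendible'' is equivalent to ``cycle extendible''; it therefore suffices to produce one non-extendible non-Hamiltonian cycle.

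\textbf{Base case.} I would construct an explicit graph $G_{15}$ on $15$ vertices with a distinguished non-Hamiltonian cycle $C$ and verify three things: (i) chordality, by writing down a perfect elimination ordering (equivalently, a clique tree); (ii) Hamiltonicity, by exhibiting one Hamiltonian cycle; and (iii) non-extendibility of $C$. For (iii) the engine I would use is a connectivity obstruction: for each external vertex $v$ I would show that $G[V(C)\cup\{v\}]$ is not $2$-connected, typically because, once the other external vertices are deleted, $v$ is joined to the bulk of $C$ only through a single cut vertex, and a Hamiltonian graph must be $2$-connected. The design idea is to hang the external vertices off $C$ in small ``pockets'' attached through bottlenecks, so that inserting any one pocket vertex into the vertex set of $C$ still leaves a cut vertex, while the full graph (with all pockets present) remains $2$-connected and Hamiltonian.

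\textbf{Inductive step.} To reach every $n \ge 15$ I would pad $G_{15}$ by a connectivity-safe operation. Fix an external vertex $x_0$ that lies on the Hamiltonian cycle and let $x_0y_0$ be an incident edge of that cycle. Add a new vertex joined to $x_0$ and $y_0$ and insert it between them; then add a further new vertex joined to $x_0$ and to the vertex just added, inserting it into the newly created edge at $x_0$, and continue fanning new vertices off $x_0$ in this way. Each added vertex is simplicial, so chordality is preserved; each insertion keeps the cycle Hamiltonian and exposes a fresh edge at $x_0$ for the next step; and every added vertex has at most one neighbour in $V(C)$, hence degree at most $1$ in $G[V(C)\cup\{v\}]$, so it cannot lie on any spanning cycle of that vertex set. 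Deleting the new vertices leaves every subgraph $G[V(C)\cup\{w\}]$ on an old external vertex $w$ unchanged, so $C$ remains non-extendible. An easy induction then yields an example for each $n \ge 15$.

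\textbf{Main obstacle.} The real content is the base case, and the difficulty is the tension between chordality and non-extendibility. Chordality forces one to triangulate every long cycle, and each forced chord tends to cascade: repairing one induced cycle creates another, and the eventual repairs threaten to join some external vertex to two vertices of $C$ in a way that produces a spanning cycle on $V(C)\cup\{v\}$, precisely the failure one sees when trying to make a small example work. The craft of the construction is to choose the triangulation of $C$ and the attachment of the pockets so that the chordal completion never yields such a spanning cycle; reconciling these constraints is what forces the example to be as large as $15$ vertices and is the heart of the argument, whereas the padding step is routine once the base gadget is in hand.
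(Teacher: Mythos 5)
Your proposal has two gaps, and the second one is fatal to the design itself. The first: the base case, which you yourself call ``the real content'' and ``the heart of the argument,'' is never constructed. You list the properties that $G_{15}$ and its cycle $C$ should have, but exhibit no graph and verify nothing; since the existence of such a gadget \emph{is} the theorem, deferring it leaves the statement unproved. (Your padding step, by contrast, is correct: fanning simplicial vertices off a vertex $x_0 \notin V(C)$ preserves chordality and Hamiltonicity, each new vertex has at most one neighbour in $V(C)$, and the graphs $G[V(C)\cup\{w\}]$ for old external vertices $w$ are untouched. This is a legitimate alternative to the paper's device of varying the sizes of the pasted cliques to realize every $n \geq 15$.)

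The second gap is that your proposed ``engine'' for non-extendibility can never fire. Since $G[V(C)\cup\{v\}]$ contains the cycle $C$ together with $v$, it fails to be $2$-connected exactly when $v$ has at most one neighbour in $V(C)$ (a cycle plus a vertex with two neighbours on it is $2$-connected, and extra chords do not hurt). So your plan requires \emph{every} vertex outside $C$ to have at most one neighbour on $C$ --- and this is impossible in a Hamiltonian chordal graph. Indeed, let $B$ be a component of $G - V(C)$; $2$-connectedness gives attachments of $B$ to two distinct vertices of $C$. Among all cycles formed by a path inside $B$, a path inside $G[V(C)]$, and two $B$-to-$C$ edges with distinct ends on $C$, take a shortest one $Z$; if its $B$-side were a single vertex, that vertex would have two neighbours on $C$ and we are done, so $Z$ has length at least $4$ and chordality forces a chord, and every possible chord either yields a shorter cycle of the same form or an edge giving some vertex of $B$ two neighbours on $C$ --- a contradiction either way. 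Hence some external vertex $v$ always has at least two neighbours on $C$, the graph $G[V(C)\cup\{v\}]$ is $2$-connected for that $v$, and connectivity alone cannot rule out extension. This is precisely where the paper's construction does its real work: in $G_k = K_k \vee P_{2k+1}$ with cliques pasted on the heavy edges $A_k$, the non-extendible cycle misses two vertices $z, v_k$ that are adjacent to \emph{all} of $K_k$, so both induced supergraphs are $2$-connected; extension is excluded instead by showing that any extending cycle would have to traverse every pasted clique and hence project to a heavy cycle of $G_k$ avoiding exactly one of $z, v_k$, which Lemma \ref{noextend} forbids by a structural argument. No amount of care in choosing pockets and triangulations can reconcile your constraints, so the base case as designed cannot exist at any size.
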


In this paper, we improve on the counterexample construction given in \cite{LS14} to show that counterexamples to Hendry's Conjecture exist even in highly restrictive settings.  For instance, in \cite{LS14}, it was asked whether or not Hendry's Conjecture holds for either strongly chordal graphs.  This question is answered in the negative in Section \ref{strongex}.  In Section \ref{modified}, we examine ways in which we can modify the construction given in Section \ref{strongex} to obtain examples which satisfy even stronger conditions in terms of forbidden induced paths (improving on a result from \cite{LS14}), connectivity (answering a question from \cite{LS14}, $S$-extendability (answering a conjecture from \cite{A14}), and underlying tree structure (showing that results in \cite{ABS13} are almost best possible).  Note that similar results to some of those presented in Sections 2 and 3 were independently obtained by Rong et al.~\cite{RLWY20}.
Finally, we propose an extremal problem in Section \ref{conclusion} related to our counterexample constructions.

\section{A new family of counterexamples to Hendry's Conjecture}\label{strongex}

In this section, we describe a family of strongly chordal graphs for which Hendry's Conjecture fails to hold; that is, the graphs are Hamiltonian and chordal, but not cycle extendable.

The {\em join} of two disjoint graphs $G$ and $H$, denoted $G \vee H$, is the graph with vertex set $V(G) \cup V(H)$ and edge set $E(G) \cup E(H) \cup \{uv \,:\, u \in V(G), v \in V(H)\}$.  For the remainder of this note, we let $G_k := K_k \vee P_{2k+1}$, where the vertices of the complete graph $K_k$ are denoted $\{x_1, x_2, \ldots, x_k\}$ and the path $P_{2k+1}$ has vertices (in order) $\{u_1, u_2, \ldots u_k, z, v_k, \ldots, v_2, v_1\}$.  Let $A_k  = \{x_iu_i \,:\, 1 \leq i \leq k\} \cup \{x_iv_i \,:\, 1 \leq i \leq k-1\} \subset E(G_k)$; we call an edge $e \in A_k$ a {\em heavy edge} of $G_k$.  Figure \ref{base} depicts $G_k$ with the heavy edges $A_k$ in bold. 

\begin{figure}[htb!]
\begin{center}
\scalebox{0.7}{
\begin{tikzpicture}
\clip(-2.5,-1.5) rectangle (14.5, 9);

\draw (-2,0) -- (-1,0);
\draw (1,0) -- (11,0);
\draw (13,0) -- (14,0);

\draw (6,0) -- (6,5);
\draw (6,7) -- (6,8);

\draw (6,2) -- (-2,0);
\draw (6,2) -- (2,0);
\draw (6,2) -- (4,0);
\draw (6,2) -- (6,0);
\draw (6,2) -- (8,0);
\draw (6,2) -- (10,0);
\draw (6,2) -- (14,0);

\draw (6,4) to [out=200, in=45] (-2,0);
\draw (6,4) to [out=210, in=60] (2,0);
\draw (6,4) to [out=220, in=75] (4,0);
\draw (6,4) to [out=300, in=60] (6,0);
\draw (6,4) to [out=320, in=105] (8,0);
\draw (6,4) to [out=330, in=120] (10,0);
\draw (6,4) to [out=340, in=135] (14,0);

\draw (6,8) to [out=300, in=60] (6,2);

\draw (6,8) to [out=200, in=90] (-2,0);
\draw (6,8) to [out=210, in=90] (2,0);
\draw (6,8) to [out=220, in=90] (4,0);
\draw (6,8) to [out=310, in=50] (6,0);
\draw (6,8) to [out=320, in=90] (8,0);
\draw (6,8) to [out=330, in=90] (10,0);
\draw (6,8) to [out=340, in=90] (14,0);

\draw [line width=6pt]  (6,2) -- (4,0);
\draw [line width=6pt]  (6,4) to [out=210, in=60] (2,0);
\draw [line width=6pt]  (6,4) to [out=330, in=120] (10,0);
\draw [line width=6pt]  (6,8) to [out=200, in=90] (-2,0);
\draw [line width=6pt]  (6,8) to [out=340, in=90] (14,0);

\begin{scriptsize}

\fill [color=black] (-2,0) circle (6pt);
\fill [color=black] (2,0) circle (6pt);
\fill [color=black] (4,0) circle (6pt);
\fill [color=black] (6,0) circle (6pt);
\fill [color=black] (8,0) circle (6pt);
\fill [color=black] (10,0) circle (6pt);
\fill [color=black] (14,0) circle (6pt);

\fill [color=black] (6,2) circle (6pt);
\fill [color=black] (6,4) circle (6pt);
\fill [color=black] (6,8) circle (6pt);

\fill [color=black] (-0.5,0) circle (1pt);
\fill [color=black] (0,0) circle (1pt);
\fill [color=black] (0.5,0) circle (1pt);

\fill [color=black] (11.5,0) circle (1pt);
\fill [color=black] (12,0) circle (1pt);
\fill [color=black] (12.5,0) circle (1pt);

\fill [color=black] (6,5.5) circle (1pt);
\fill [color=black] (6,6) circle (1pt);
\fill [color=black] (6,6.5) circle (1pt);

\end{scriptsize}

\draw (-2, -0.5) node[anchor=center] {\Large $u_1$};
\draw (2, -0.5) node[anchor=center] {\Large $u_{k-1}$};
\draw (4, -0.5) node[anchor=center] {\Large $u_k$};
\draw (6, -0.5) node[anchor=center] {\Large $z$};
\draw (8, -0.5) node[anchor=center] {\Large $v_k$};
\draw (10, -0.5) node[anchor=center] {\Large $v_{k-1}$};
\draw (14, -0.5) node[anchor=center] {\Large $v_1$};

\draw (5.6, 2.4) node[anchor=center] {\Large $x_k$};
\draw (5.5, 4.4) node[anchor=center] {\Large $x_{k-1}$};
\draw (5.6, 8.4) node[anchor=center] {\Large $x_1$};

\end{tikzpicture}
}
\caption{The base graph $G_k$}\label{base}
\end{center}
\end{figure}
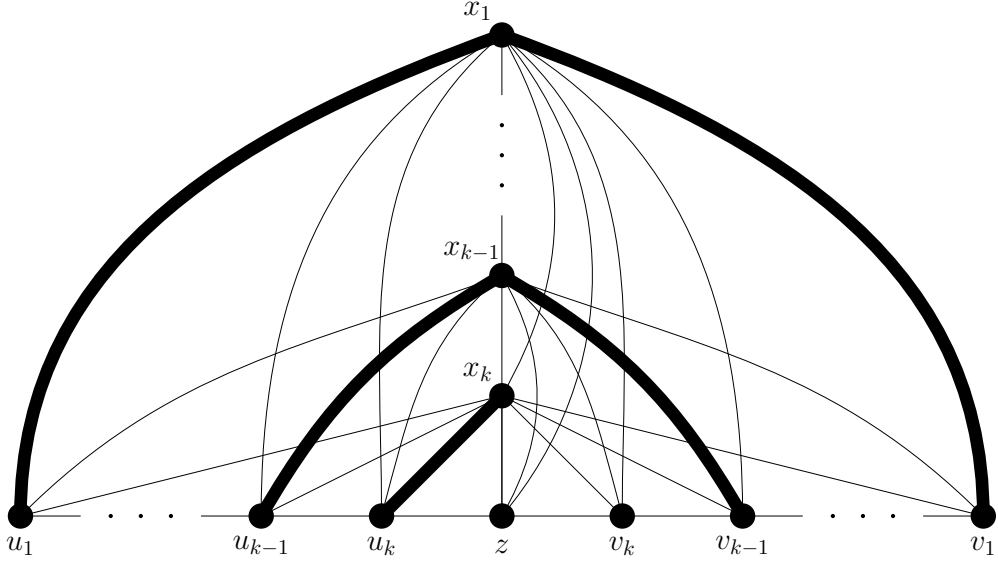

Rather than work directly with the definition of strongly chordal graphs, we use an equivalent characterization, given by 
Farber in \cite{F83}.
Let $G$ be a graph.  A vertex $v \in V(G)$ is called {\em simple} if for all $x,y \in N[v]$ either $N[x] \subseteq N[y]$ or $N[y] \subseteq N[x]$.  In other words, $x$ is simple if the closed neighbourhoods of its neighbours can be ordered by inclusion.
A {\em simple elimination ordering} of $G$ is a vertex ordering $v_1 \prec v_2 \prec \ldots \prec v_n$ such that $v_i$ is simple in $G[\{v_i, \ldots, v_n\}]$ for every $1 \le i \le n$.

\begin{thm}[Farber \cite{F83}]\label{Farber}
A graph $G$ is strongly chordal if and only if $G$ admits a simple elimination ordering.
\end{thm}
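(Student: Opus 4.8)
The plan is to prove the two implications separately. A useful preliminary observation is that strong chordality is hereditary under taking induced subgraphs: if $H$ is induced in a strongly chordal graph $G$, then $H$ is chordal, and any even cycle of length at least $6$ in $H$ is also one in $G$, so its odd chord (an edge joining two cycle vertices at odd distance) is an edge of $H$ as well. I will use this to turn the harder implication into a purely local statement by induction on $|V(G)|$.

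For the implication that a simple elimination ordering forces strong chordality, I would first note that every simple vertex is simplicial: if $x,y\in N(v)$ then $x,y\in N[v]$, so $N[x]\subseteq N[y]$ or $N[y]\subseteq N[x]$, and either way $x\sim y$. Hence a simple elimination ordering is in particular a perfect elimination ordering, and $G$ is chordal. For the odd-chord condition, let $C=c_0c_1\cdots c_{2k-1}$ be an even cycle with $2k\geq 6$, and let $c_i$ be the vertex of $C$ occurring earliest in the ordering; when $c_i$ is eliminated, all of $C$ is still present and $c_i$ is simple in the current graph, so its cycle-neighbours $c_{i-1},c_{i+1}$ have comparable closed neighbourhoods. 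If $N[c_{i-1}]\subseteq N[c_{i+1}]$, then $c_{i-2}\in N[c_{i-1}]$ gives $c_{i-2}\in N[c_{i+1}]$, so $c_{i+1}c_{i-2}$ is an edge; since $2k\geq 6$ these vertices are non-consecutive on $C$ and lie at cycle-distance $3$, which is odd, yielding the required chord (the case $N[c_{i+1}]\subseteq N[c_{i-1}]$ gives $c_{i-1}c_{i+2}$ symmetrically).

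For the converse, induction on $|V(G)|$ together with heredity reduces everything to a single claim: every strongly chordal graph has a simple vertex. Indeed, after locating such a vertex $v$, I delete it, apply the induction hypothesis to the strongly chordal graph $G-v$, and prepend $v$ to the ordering obtained. The difficulty is that, although chordality guarantees a simplicial vertex, a simplicial vertex need not be simple — a triangle with a pendant attached to each of two of its corners already exhibits a simplicial apex that is not simple — so the simplicial vertex must be selected extremally. My plan is to take a simplicial vertex $v$ whose closed neighbourhood is minimal under inclusion among those of simplicial vertices, and to argue it is simple. If it were not, there would be $x,y\in N[v]$ with $N[x]\not\subseteq N[y]$ and $N[y]\not\subseteq N[x]$, hence $a\in N[x]\setminus N[y]$ and $b\in N[y]\setminus N[x]$; chordality forces $a\not\sim b$ (else $axyb$ is an induced $4$-cycle), and one checks $a,b\not\sim v$, so that $\{a,x,v,y,b\}$ induces a triangle with two pendants.

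The main obstacle is to promote this local failure of comparability into a global violation of the odd-chord condition. The idea is to connect $a$ to $b$ by a shortest path avoiding the clique $N[v]$ and to close it through $x,v,y$, producing a cycle on which the extremal choice of $v$ forces the outside attachments to be staggered as in a sun; the parity of this construction should then yield an even cycle of length at least $6$ all of whose chords join vertices at even distance, contradicting strong chordality. Making this parity bookkeeping rigorous — equivalently, extracting such a forbidden cycle from the failure of simplicity — is the delicate part of the argument, and it is exactly here that the full odd-chord hypothesis, rather than mere chordality, is consumed; this is the step I expect to require the most care.
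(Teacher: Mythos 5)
First, a point of comparison: the paper offers no proof of this statement at all --- it is Farber's theorem, quoted from \cite{F83} and used as a black box to certify that $G_k$ and the graphs in $\mathcal{H}_k$ are strongly chordal --- so your attempt must be judged entirely on its own merits. Your forward direction is correct and is the standard argument: simple vertices are simplicial, so a simple elimination ordering is in particular a perfect elimination ordering (giving chordality), and comparing the closed neighbourhoods of $c_{i-1}$ and $c_{i+1}$ in the residual graph at the moment the earliest cycle vertex $c_i$ is eliminated produces the chord $c_{i+1}c_{i-2}$ (or $c_{i-1}c_{i+2}$), which joins vertices at odd cycle-distance since both arcs it cuts off have odd length.

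The converse, however, contains a genuine gap. Your key claim --- that in a strongly chordal graph a simplicial vertex whose closed neighbourhood is inclusion-minimal among those of simplicial vertices must be simple --- is false, and the counterexample is the very bull you yourself introduce. Let $B$ be the triangle $xvy$ with pendant $a$ attached to $x$ and pendant $b$ attached to $y$. Then $B$ is strongly chordal (its only cycle is a triangle, so the odd-chord condition holds vacuously); its simplicial vertices are $a$, $b$, and $v$; and $N[v]=\{x,v,y\}$ is inclusion-minimal among the closed neighbourhoods of simplicial vertices, because $N[a]=\{a,x\}$, $N[b]=\{b,y\}$, and $N[v]$ are pairwise incomparable. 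Yet $v$ is not simple, since $N[x]=\{a,x,v,y\}$ and $N[y]=\{b,x,v,y\}$ are incomparable. Minimality under inclusion cannot discriminate between incomparable neighbourhoods, so your selection rule picks a vertex that genuinely need not be simple, and no argument can close that gap; consistently, your planned contradiction also collapses on $B$, where no path from $a$ to $b$ avoiding the clique $N[v]$ exists. Finally, even if the selection rule were repaired, the step you explicitly defer --- promoting the bull configuration plus extremality to an even cycle of length at least $6$ with no odd chord (in essence, extracting an induced sun) --- is not a finishing detail but the actual mathematical content of the hard direction of Farber's theorem; leaving it as ``the delicate part'' means the implication from strong chordality to the existence of a simple vertex remains unproved.
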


It is easy to check that the following ordering of $E(G_k)$ is a simple elimination ordering:
$$u_1 \prec u_2 \prec \cdots \prec u_k \prec v_1 \prec v_2 \prec \cdots \prec v_k \prec x_1 \prec x_2 \prec \cdots \prec x_k \prec z,$$
and so we have the following:

\begin{lem}\label{strong}
For any $k \geq 1$, $G_k$ is strongly chordal.
\end{lem}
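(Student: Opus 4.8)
The plan is to invoke Theorem~\ref{Farber} directly: it suffices to verify that the stated ordering
$$u_1 \prec \cdots \prec u_k \prec v_1 \prec \cdots \prec v_k \prec x_1 \prec \cdots \prec x_k \prec z$$
is in fact a simple elimination ordering of $G_k$. Since the authors have already asserted that this is ``easy to check,'' my proof would make the verification explicit. For each vertex $w$ in the ordering, I would compute its closed neighbourhood inside the graph induced by $w$ and all vertices that follow it, and confirm that $w$ is simple there; that is, that the closed neighbourhoods of the elements of $N[w]$ are linearly ordered by inclusion.

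First I would handle the $u_i$ vertices. When we reach $u_i$, the remaining graph consists of $u_i$ together with $u_{i+1},\dots,u_k,z,v_k,\dots,v_1$ on the path side and all of $K_k = \{x_1,\dots,x_k\}$. In $G_k$, the path vertex $u_i$ is adjacent only to its path neighbours and to every $x_j$ (by the join), but in the induced subgraph its only surviving path-neighbour is $u_{i+1}$ (since $u_{i-1}$ has been deleted). The key observation is that $u_i$ is a near-leaf: its closed neighbourhood is $\{u_i, u_{i+1}\} \cup \{x_1,\dots,x_k\}$, and I would check that the closed neighbourhoods of these vertices, restricted to the induced subgraph, nest. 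The $x_j$ all dominate the path vertices via the join, so $N[u_{i+1}]$ and each $N[x_j]$ compare appropriately, with $u_i$ itself having the smallest closed neighbourhood. The same bookkeeping, with $u_{i+1}$ replaced by $v_k$ when $i=k$, dispatches the whole $u$-block.

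Next I would treat the $v_i$ block, which by the left–right symmetry of the path (after the $u_i$ have been removed) behaves analogously, the only asymmetry being that $v_i$'s surviving path-neighbour is $v_{i+1}$ (or $z$ when $i=k$) and that $A_k$ contains no heavy edge $x_k v_k$ — but heavy edges play no role in this verification, since $G_k$ is the join and all edges are present. Then I would handle the clique vertices $x_1,\dots,x_k$: at the stage for $x_i$, the remaining graph is $\{x_i,\dots,x_k,z\}$, a clique (the $x_j$ are mutually adjacent and each is joined to $z$), so every vertex is simple trivially because all closed neighbourhoods coincide with the whole remaining vertex set. Finally $z$ alone is trivially simple.

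The main obstacle, such as it is, is purely organizational: keeping careful track of which path-neighbours survive at each deletion stage and confirming the inclusion chain among the $N[x_j]$ and the lone surviving path-neighbour. There is no conceptual difficulty, since the join structure forces every $x_j$ to dominate all path vertices, making the closed-neighbourhood inclusions fall out almost immediately; the verification is a finite, case-based check rather than an argument requiring a new idea. Once the ordering is confirmed to be a simple elimination ordering, Theorem~\ref{Farber} yields that $G_k$ is strongly chordal for every $k \geq 1$, completing the proof.
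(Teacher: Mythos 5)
Your proposal takes essentially the same route as the paper: the paper's entire proof consists of exhibiting this very ordering, asserting the verification is ``easy to check,'' and invoking Farber's characterization (Theorem~\ref{Farber}), so your explicit case analysis simply fills in that routine check correctly. One minor slip worth fixing: when $i=k$ the lone surviving path-neighbour of $u_k$ is $z$, not $v_k$ (the path is $u_1,\ldots,u_k,z,v_k,\ldots,v_1$), but the nesting chain $N[u_k] \subseteq N[z] \subseteq N[x_j]$ goes through unchanged, so the argument is unaffected.
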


A cycle $C$ is a {\em heavy cycle} of $G_k$ if $A_k \subset E(C)$.

\begin{lem}\label{ham}
For any $k \geq 1$, $G_k$ has a heavy Hamiltonian cycle $C$.
\end{lem}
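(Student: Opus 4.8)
The plan is to exhibit an explicit Hamiltonian cycle of $G_k$ that contains every heavy edge. The first step is to notice how severely the heavy edges constrain such a cycle. For each $i$ with $1 \le i \le k-1$, both heavy edges $x_iu_i$ and $x_iv_i$ are incident to $x_i$, so in any heavy cycle they must be exactly the two edges meeting $x_i$; hence the cycle necessarily contains the path $u_i - x_i - v_i$. Likewise it must contain the edge $x_ku_k$. Since the path $P_{2k+1}$ is an induced subgraph of $G_k$ and the vertices $x_1, \ldots, x_{k-1}$ already have both of their cycle-edges determined, the only edges still available to stitch everything together are the edges of the spine $u_1 - \cdots - u_k - z - v_k - \cdots - v_1$ together with the (join) edges from $x_k$ to the spine; clique edges at $x_k$ are useless because the other $x_j$ are already saturated. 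So, after contracting each forced path $u_i - x_i - v_i$ to a single edge joining $u_i$ to $v_i$, the task reduces to threading these contracted edges, the forced edge $x_ku_k$, and the two remaining vertices $z$ and $v_k$ into one spanning cycle, using spine edges and a single edge incident to $x_k$.

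Next I would write the cycle down. The idea is to pass through the centre of the spine via $u_k - z - v_k$, and then zig-zag outward: alternately traverse one forced path $u_i - x_i - v_i$ and take one spine step, decreasing the index from $k-1$ down to $1$ while switching between the $u$-side and the $v$-side at each gadget. The single free edge at $x_k$ is then used to close the cycle, joining $x_k$ to $u_1$ when $k$ is even and to $v_1$ when $k$ is odd. For example, when $k=4$ the resulting heavy Hamiltonian cycle is
\[
u_1 - x_1 - v_1 - v_2 - x_2 - u_2 - u_3 - x_3 - v_3 - v_4 - z - u_4 - x_4 - u_1 .
\]
One checks directly that every vertex of $G_k$ appears exactly once, that consecutive vertices are adjacent in $G_k$, and that all $2k-1$ edges of $A_k$ occur.

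The one point requiring care --- and the only real obstacle --- is confirming that this prescription produces a \emph{single} cycle rather than a disjoint union of several cycles, and that the closing edge at $x_k$ is attached to the correct endpoint. I would handle this by verifying that the chosen edge set gives every vertex degree exactly $2$ and then tracing the walk to see that it is connected; the zig-zag ordering is precisely what prevents the low-index gadgets from being split off into a separate cycle. The parity of $k$ determines whether the last loose end is $u_1$ or $v_1$, and choosing the wrong partner for $x_k$ (for instance joining $x_k$ to $z$) would indeed cut off a short cycle omitting the gadgets of small index, so the parity-dependent choice of closing edge is essential.
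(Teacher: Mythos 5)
Your proof is correct and follows essentially the same approach as the paper: both exhibit the explicit zig-zag Hamiltonian cycle that passes through $u_k - z - v_k$ at the centre, traverses each forced path $u_i - x_i - v_i$ alternating sides as the index decreases, and closes with the join edge $x_ku_1$ or $x_kv_1$ according to the parity of $k$. Indeed, your $k=4$ example is exactly the cycle determined by the paper's even-$k$ edge set $A_k \cup \{u_1x_k, u_2u_3, \ldots, u_{k-2}u_{k-1}, u_kz, zv_k, v_kv_{k-1}, \ldots, v_2v_1\}$, and your degree-two-plus-connectivity verification matches what the paper leaves implicit.
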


\begin{proof}
If $k$ is even, then there is a heavy Hamiltonian cycle with edge set $A_k \cup \{u_1x_k, u_2u_3, \ldots u_{k-2}u_{k-1}, u_kz, zv_k, v_kv_{k-1}, \ldots, v_2v_1\}$.  If $k$ is odd, then there is a heavy Hamiltonian cycle with edge set $A_k \cup \{u_1u_2, \ldots, u_{k-2}u_{k-1}, u_kz, zv_k, v_kv_{k-1}, \ldots, v_3v_2, v_1x_k\}$. 
\end{proof}

\begin{lem}\label{long}
For any $k \geq 2$, $G_k$ has a heavy cycle $C_1$ such that $V(C_1) = V(G_k) \setminus \{v_k,z\}$.
\end{lem}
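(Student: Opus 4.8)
The plan is to mimic the proof of Lemma~\ref{ham}: I would exhibit an explicit edge set for the desired heavy cycle $C_1$, splitting into the cases $k$ even and $k$ odd, and then check that the chosen edges form a single cycle of the correct order containing all of $A_k$. Note first that $A_k \subseteq E(G_k)$ uses neither $v_k$ nor $z$, so all heavy edges survive the deletion of these two vertices and may legitimately be used.

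The guiding observation is that the heavy edges already force most of the structure. Every $u_i$ with $1 \le i \le k$ lies on the heavy edge $x_iu_i$, and every $x_i$ with $i \le k-1$ lies on the two heavy edges $x_iu_i$ and $x_iv_i$; hence in any heavy cycle the vertices $u_i,x_i,v_i$ must appear consecutively as a segment $u_i - x_i - v_i$ for each $i \le k-1$, together with the segment $u_k - x_k$. Since $v_k$ and $z$ are removed, the only neighbour of $u_k$ other than $x_k$ that remains available is $u_{k-1}$ (each $x_i$ with $i \le k-1$ already has both incidences used by heavy edges), so the edge $u_{k-1}u_k$ is in fact forced. Building $C_1$ therefore amounts to stitching these forced segments into one cycle using only non-heavy edges on $V(G_k)\setminus\{v_k,z\}$, namely the path edges among the $u_i$, the path edges among $v_1,\dots,v_{k-1}$, and join edges incident to $x_k$.

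Concretely, for $k$ even I would take the edge set
\[
A_k \cup \{x_kv_1\} \cup \{u_{2j-1}u_{2j} : 1 \le j \le k/2\} \cup \{v_{2j}v_{2j+1} : 1 \le j \le (k-2)/2\},
\]
and for $k$ odd the edge set
\[
A_k \cup \{x_ku_1\} \cup \{v_{2j-1}v_{2j} : 1 \le j \le (k-1)/2\} \cup \{u_{2j}u_{2j+1} : 1 \le j \le (k-1)/2\}.
\]
In each case one checks that every listed edge is present after deleting $v_k$ and $z$, that $A_k$ is contained by construction, and that every vertex of $V(G_k)\setminus\{v_k,z\}$ has degree exactly $2$: the heavy edges contribute one incidence at each $u_i$ and each $v_i$ and two at each $x_i$ with $i \le k-1$, while the added matchings, path edges, and the single closing edge $x_kv_1$ (resp.\ $x_ku_1$) supply precisely the missing incidences.

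The step I expect to require the most care is confirming that these edges yield a single cycle rather than a disjoint union of shorter cycles; this is exactly where a naive local patch of the Hamiltonian cycle from Lemma~\ref{ham} fails, since deleting $z$ and $v_k$ would ask for a nonexistent edge $u_kv_{k-1}$, forcing the more global rerouting above. I would settle connectivity by tracing the cycle explicitly: in the even case, starting from $v_1$ one follows the zig-zag
\[
v_1 - x_1 - u_1 - u_2 - x_2 - v_2 - v_3 - x_3 - u_3 - u_4 - \cdots - v_{k-1} - x_{k-1} - u_{k-1} - u_k - x_k - v_1,
\]
and in the odd case, starting from $u_1$ one follows
\[
u_1 - x_1 - v_1 - v_2 - x_2 - u_2 - u_3 - x_3 - v_3 - \cdots - u_{k-1} - u_k - x_k - u_1,
\]
each passing through all $3k-1$ vertices exactly once before closing up. This simultaneously verifies that $V(C_1) = V(G_k)\setminus\{v_k,z\}$ and completes the argument.
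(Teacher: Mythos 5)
Your proposal is correct and takes essentially the same approach as the paper: both exhibit explicit edge sets split by the parity of $k$, and your edge sets (the alternating matchings on the $u_i$'s and $v_i$'s together with the closing join edge $x_kv_1$ or $x_ku_1$) coincide with those given in the paper's proof. The degree count and explicit cycle trace you provide are verification details the paper leaves to the reader.
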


\begin{proof}
If $k$ is even, then there is a heavy cycle with edge set $A_k \cup \{u_1u_2, \ldots, u_{k-1}u_{k}, v_{k-1}v_{k-2}, \ldots, v_3v_2, v_1x_k\}$.  If $k$ is odd, then there is a heavy cycle with edge set $A_k \cup \{u_1x_k, u_2u_3, \ldots, u_{k-1}u_{k}, v_{k-1}v_{k-2}, \ldots, v_2v_1\}$.
\end{proof}

\begin{lem}\label{noextend}
For any $k \geq 3$, $G_k$ does not contain a heavy cycle whose vertex set is either $V(G) \setminus \{z\}$ or $V(G) \setminus \{v_k\}$.
\end{lem}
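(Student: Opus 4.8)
The plan is to argue by contradiction: suppose that for some $k \geq 3$ there is a heavy cycle $C$ with $V(C) = V(G_k) \setminus \{z\}$ or with $V(C) = V(G_k) \setminus \{v_k\}$. The first thing I would do is record the edges of $C$ that are forced by the requirement $A_k \subseteq E(C)$, using only the fact that every vertex of a cycle has degree exactly $2$. For each $i$ with $1 \leq i \leq k-1$, both heavy edges $x_iu_i$ and $x_iv_i$ are incident to $x_i$, so each such $x_i$ must have cycle-neighbours exactly $u_i$ and $v_i$; in particular $x_1, \dots, x_{k-1}$ are \emph{saturated} and can accept no further edge of $C$. Likewise $x_k$ is forced to be adjacent to $u_k$, every $u_i$ is forced to be adjacent to $x_i$, and every $v_i$ with $i < k$ is forced to be adjacent to $x_i$. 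The crucial consequence I would extract is that, apart from $x_k$, no clique vertex can receive a non-heavy edge of $C$, so the remaining freedom in $C$ is very limited.

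For the case $V(C) = V(G_k)\setminus\{v_k\}$ I would examine the vertex $z$. In $G_k - v_k$ its only neighbours are $u_k$ and the clique vertices $x_1,\dots,x_k$, and since $x_1,\dots,x_{k-1}$ are saturated, the two cycle-edges at $z$ can only be $zu_k$ and $zx_k$; hence both are forced. But $x_ku_k \in A_k$ is forced as well, so $z$, $u_k$, and $x_k$ each have both of their cycle-edges inside the triangle $zu_kx_k$. This triangle is then a component of $C$, which is impossible since $C$ is a single cycle on $3k \geq 9$ vertices.

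For the case $V(C) = V(G_k)\setminus\{z\}$ I would run the same idea one step further, now anchored at $v_k$. In $G_k - z$ the neighbours of $v_k$ are $v_{k-1}$ and $x_1,\dots,x_k$, and only $v_{k-1}$ and $x_k$ are unsaturated, so $v_{k-1}v_k$ and $x_kv_k$ are both forced; this saturates $x_k$. Next I would look at $u_k$: its only neighbours in $G_k - z$ are $u_{k-1}$ and the clique vertices, all of which except $u_{k-1}$ are now unavailable, so $u_{k-1}u_k$ is forced too. Collecting the forced edges $x_{k-1}u_{k-1}$, $x_{k-1}v_{k-1}$, $v_{k-1}v_k$, $x_kv_k$, $x_ku_k$, $u_{k-1}u_k$ shows that the six vertices $u_{k-1}, x_{k-1}, v_{k-1}, v_k, x_k, u_k$ each have both of their cycle-edges inside the $6$-cycle they span; this hexagon is therefore a component of $C$, again impossible for a single cycle on $3k \geq 9$ vertices.

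The routine part is the bookkeeping of forced edges. The one step that requires care, and where the hypothesis $k \geq 3$ enters, is verifying that the short forced cycle is a \emph{proper} subgraph of $C$ (i.e. that $3k > 6$), so that a genuine contradiction arises. The main idea I expect to be the real content of the proof is the observation that saturation of $x_1,\dots,x_{k-1}$ collapses the neighbourhood of the ``orphaned'' path endpoint ($z$ when $v_k$ is deleted, and $v_k$ when $z$ is deleted) down to just two usable vertices, which is exactly what pins down the forbidden short cycle.
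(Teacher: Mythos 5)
Your proposal is correct and follows essentially the same argument as the paper: both exploit that the heavy edges saturate $x_1,\dots,x_{k-1}$, then force the remaining cycle edges at $z$ (respectively $v_k$) to close a triangle $zu_kx_k$ (respectively the hexagon $u_{k-1}x_{k-1}v_{k-1}v_kx_ku_k$), contradicting the required number of vertices. The only difference is bookkeeping order (the paper forces $u_{k-1}u_k$ directly rather than via saturating $x_k$ first), which is immaterial.
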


\begin{proof}
Suppose, to the contrary, such a cycle does exist.  It clearly cannot contain any edge incident to a non-heavy edge in $\{x_1, \ldots, x_{k-1}\}$, nor can it contain the edge $zv_k$.  This immediately implies that no heavy cycle exists with vertex set $V(G) \setminus \{v_k\}$, since the only available edges incident to $z$ are $zu_k$ and $zx_k$.  This, in turn, implies that both $u_{k-1}u_{k}$ and $v_{k-1}v_{k}$ must be in such a cycle.  Hence our cycle must be exactly $u_{k-1}x_{k-1}v_{k-1}v_kx_ku_ku_{k-1}$, contradicting the fact that $k \geq 3$.
\end{proof}

We now define the family of graphs $\mathcal{H}_k$ as those which can be obtained from $G_k$ by pasting a distinct clique onto each edge in $A_k$.

\begin{lem}\label{bigstrong}
For each $k \geq 3$, every graph in $\mathcal{H}_k$ is strongly chordal.
\end{lem}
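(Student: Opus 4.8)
The plan is to exhibit a simple elimination ordering of an arbitrary $H \in \mathcal{H}_k$ and then invoke Farber's characterization (Theorem~\ref{Farber}). Write $H$ as $G_k$ together with, for each heavy edge $e = ab \in A_k$, a pasted clique $Q_e$ whose only vertices shared with $G_k$ are $a$ and $b$; every vertex of $Q_e \setminus \{a,b\}$ is adjacent precisely to $a$, to $b$, and to the other vertices of $Q_e$. I would order the vertices of $H$ so that \emph{all} pasted vertices (those in $\bigcup_e (Q_e \setminus \{a,b\})$, in any order) come first, followed by the vertices of $G_k$ in the order $u_1 \prec \cdots \prec u_k \prec v_1 \prec \cdots \prec v_k \prec x_1 \prec \cdots \prec x_k \prec z$ already shown to be a simple elimination ordering of $G_k$ in the discussion preceding Lemma~\ref{strong}.

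The key structural observation is that in $G_k$ every $x_i$ is universal, so $N[x_i] = V(G_k)$, and each heavy edge has exactly one endpoint of the form $x_i$. I would first check that every pasted vertex is simple at the moment it is eliminated. Let $w \in Q_e \setminus \{a,b\}$ with $e = x_i b$ (where $b \in \{u_i, v_i\}$), and let $H'$ be the induced subgraph present when $w$ is removed; since only pasted vertices have been eliminated so far, $H'$ contains all of $G_k$. Then $N_{H'}[w] = \{x_i, b\} \cup (Q_e \cap V(H'))$, and the closed neighbourhoods of the vertices of $N_{H'}[w]$ form a chain under inclusion: any two surviving pasted vertices of $Q_e$ have equal closed neighbourhood; each such vertex's closed neighbourhood is contained in both $N_{H'}[x_i]$ and $N_{H'}[b]$; and finally $N_{H'}[b] \subseteq N_{H'}[x_i]$, because $N_{G_k}[b] \subseteq V(G_k) \subseteq N_{H'}[x_i]$ while the only pasted vertices adjacent to $b$ lie in $Q_e$ (as $b$ is incident to a single heavy edge) and are also adjacent to $x_i$. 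Hence $w$ is simple in $H'$, and this holds irrespective of which other pasted vertices were eliminated earlier, so the pasted vertices may be removed in any order.

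Once all pasted vertices are gone, the remaining induced subgraph is exactly $G_k$ (pasting adds only new clique vertices and no edges between pre-existing vertices), so the tail of the ordering is precisely the simple elimination ordering of $G_k$ guaranteed by Lemma~\ref{strong}, and every subsequent vertex is simple in its turn. This produces a simple elimination ordering of $H$, and Theorem~\ref{Farber} then yields that $H$ is strongly chordal. The only point requiring real care --- the main obstacle --- is the chain condition $N_{H'}[b] \subseteq N_{H'}[x_i]$ for the two endpoints of the heavy edge; this is exactly where the universality of the $x_i$ in $G_k$, together with the fact that every heavy edge is incident to some $x_i$, is used, and it is what makes the pasted vertices simple no matter how large the pasted cliques are.
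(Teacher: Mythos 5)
Your proof is correct and follows essentially the same route as the paper's: both exhibit a simple elimination ordering of $H$ that removes all pasted-clique vertices first (in any order) and then follows the simple elimination ordering of $G_k$, with the key nesting fact $N[b] \subseteq N[x_i]$ for the endpoints of a heavy edge justified by the universality of the $x_i$ in $G_k$ (the paper states this as $N_{G_k}(u_i) \subseteq N_{G_k}(x_i)$ and $N_{G_k}(v_i) \subseteq N_{G_k}(x_i)$). Your write-up simply spells out in more detail the chain-of-neighbourhoods verification that the paper leaves implicit.
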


\begin{proof}
By Theorem \ref{Farber}, we need only show that $H$ admits a simple elimination ordering.  Let $G_k$ be the graph from which $\mathcal{H}_k$ is obtained.  For each $i = 1, 2, \ldots, k$, $N_{G_k}(u_i) \subseteq N_{G_k}(x_i)$ and $N_{G_k}(v_i) \subseteq N_{G_k}(x_i)$.  It follows that we can obtain a simple elimination ordering of $H$ as follows: first, for each complete graph pasted onto a heavy edge of $G_k$, take in any order all of its vertices except those in the edge onto which it is pasted in $H$, then finish by taking a simple elimination ordering of $G_k$ guaranteed by Lemma \ref{strong}. 
\end{proof}

\begin{thm}\label{result}
If $H \in \mathcal{H}_k$ for some $k \geq 3$, then $H$ is Hamiltonian but not cycle extendable.
\end{thm}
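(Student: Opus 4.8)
The plan is to establish the two claims separately, leveraging the structural lemmas already proved for the base graph $G_k$. For Hamiltonicity, I would take the heavy Hamiltonian cycle $C$ of $G_k$ guaranteed by Lemma~\ref{ham}, and observe that since $C$ uses every heavy edge of $A_k$, I can ``detour'' through each pasted clique: whenever $C$ traverses a heavy edge $e = ab \in A_k$, replace the single edge $ab$ by a Hamiltonian path in the pasted clique $K_e$ from $a$ to $b$ that uses all the newly pasted vertices. Since a complete graph on any vertex set containing $a$ and $b$ clearly has a Hamiltonian $a$--$b$ path, and since the cliques are pasted onto distinct edges (so the detours are vertex-disjoint except at the shared endpoints), stitching these detours into $C$ yields a Hamiltonian cycle of $H$.

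For the failure of cycle extendibility, the key is to exhibit a single non-Hamiltonian cycle of $H$ that cannot be extended. Here I would start from the heavy cycle $C_1$ of Lemma~\ref{long}, whose vertex set is $V(G_k) \setminus \{v_k, z\}$. Since $C_1$ is heavy, I can again replace each heavy edge by a Hamiltonian path through its pasted clique to obtain a cycle $C_1'$ in $H$ whose vertex set is $V(H) \setminus \{v_k, z\}$. I claim $C_1'$ is non-extendible: extending $C_1'$ by one vertex means adding exactly one vertex from $\{v_k, z\}$ while keeping all current vertices, so the new cycle would have vertex set either $V(H)\setminus\{v_k\}$ or $V(H)\setminus\{z\}$. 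The heart of the argument is to show no such cycle exists in $H$, and this is where I expect the main obstacle.

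The main difficulty is translating the non-existence statement for \emph{heavy} cycles in $G_k$ (Lemma~\ref{noextend}) into a statement about \emph{arbitrary} cycles in the blown-up graph $H$. The bridge is to argue that any cycle of $H$ avoiding one of $v_k, z$ but covering all other vertices is forced to be heavy and to project down to a corresponding cycle in $G_k$. Concretely, I would show that any cycle in $H$ that contains all vertices of a pasted clique $K_e$ (for $e \in A_k$) must enter and leave $K_e$ through the two endpoints of $e$ and traverse $K_e$ as a path between them; the reason is that the pasted vertices have no neighbours outside $K_e$, so the cycle, restricted to $K_e$, is a path whose two ends are exactly the two attachment vertices $a, b$. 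Contracting each such path back to the heavy edge $ab$ then produces a cycle in $G_k$ that uses every edge of $A_k$ and has the prescribed vertex set $V(G_k)\setminus\{v_k\}$ or $V(G_k)\setminus\{z\}$, directly contradicting Lemma~\ref{noextend}.

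To make the contraction argument airtight, I would verify that the projected cycle in $G_k$ is genuinely a cycle (no repeated vertices and consecutive vertices adjacent in $G_k$): the attachment endpoints $a,b$ of each heavy edge are adjacent in $G_k$, so contracting the clique-path to the edge $ab$ preserves adjacency, and the non-clique portion of the cycle already lives in $G_k$. I would also confirm that the forced Hamiltonicity requirement ``every vertex lies in a triangle'' from the definition of fully cycle extendible is not needed here, since failing ordinary cycle extendibility already suffices for the theorem's conclusion. The only subtlety to watch is ensuring that $v_k$ and $z$ themselves are not endpoints of any pasted clique in a way that would let the cycle sneak an extra vertex in; but since the cliques are pasted only onto edges of $A_k$, and neither $zv_k$ nor any non-heavy edge incident to $\{x_1,\dots,x_{k-1}\}$ lies in $A_k$, the same edge restrictions exploited in Lemma~\ref{noextend} carry over verbatim to $H$.
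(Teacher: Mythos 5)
Your proposal is correct and follows essentially the same route as the paper: lift the heavy Hamiltonian cycle of Lemma~\ref{ham} and the heavy cycle of Lemma~\ref{long} into $H$ via Hamiltonian paths through the pasted cliques, then show any extension would project back to a heavy cycle in $G_k$ avoiding only $z$ or $v_k$, contradicting Lemma~\ref{noextend}. The only difference is one of detail: the paper asserts the correspondence between an extended cycle in $H$ and a heavy cycle in $G_k$ in a single sentence, whereas you spell out the contraction argument (interior clique vertices force the cycle to traverse each pasted clique as an $a$--$b$ path), which makes explicit the implicit assumption that each pasted clique has at least one interior vertex.
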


\begin{proof}
Let $G_k$ be the graph from which $H$ is obtained.
Lemma \ref{ham} states that $G_k$ contains a heavy Hamiltonian cycle.  By replacing each heavy edge in $G_k$ with a Hamiltonian path in its corresponding clique connecting its ends, we see that $H$ has a Hamiltonian cycle.  Similarly, Lemma \ref{long} guarantees that $H$ has a cycle of length $|V(H)| - 2$ that does not contain $z$ or $v_k$.  Call this cycle $C$.  If $C$ were to extend in $H$, then this cycle would correspond to a heavy cycle in $G_k$ that avoids only $z$ or $v_k$.  Since this cannot happen by Lemma \ref{noextend}, $H$ is not cycle extendable.
\end{proof}

Since Lemma \ref{bigstrong} states that the graphs in the proof above are strongly chordal, we have the following:

\begin{cor}
For any $n \geq 15$, there exists a Hamiltonian strongly chordal graph $G$ on $n$ vertices which is not fully cycle extendable.
\end{cor}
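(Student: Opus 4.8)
The plan is to derive the corollary as a bookkeeping consequence of Theorem~\ref{result} and Lemma~\ref{bigstrong}; the only substantive task is to check that the family $\bigcup_{k \geq 3} \mathcal{H}_k$ contains a graph of every order $n \geq 15$. First I would observe that, for any $H \in \mathcal{H}_k$ with $k \geq 3$, Theorem~\ref{result} says $H$ is Hamiltonian but not cycle extendible, while Lemma~\ref{bigstrong} says $H$ is strongly chordal. Since being fully cycle extendible requires, in particular, being cycle extendible, any such $H$ automatically fails to be fully cycle extendible. Thus every member of $\mathcal{H}_k$ (for $k \geq 3$) is already a Hamiltonian strongly chordal graph that is not fully cycle extendible, and it remains only to control the number of vertices.

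Next I would count. The graph $G_k = K_k \vee P_{2k+1}$ has $3k+1$ vertices, and the set of heavy edges satisfies $|A_k| = 2k-1$. A graph $H \in \mathcal{H}_k$ is formed by pasting a distinct clique onto each heavy edge, and pasting a copy of $K_m$ onto an edge contributes $m-2$ new vertices. Taking $k = 3$ and pasting a triangle ($K_3$, one new vertex) onto each of the five heavy edges yields $|V(H)| = (3\cdot 3 + 1) + (2\cdot 3 - 1) = 15$, giving a counterexample on exactly $15$ vertices.

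Finally, to realize every $n \geq 15$, I would keep $k = 3$ and simply enlarge the pasted cliques: replacing a single $K_3$ by $K_{3+t}$ adds $t$ vertices without affecting the hypotheses of Theorem~\ref{result} or Lemma~\ref{bigstrong}, so incrementing clique sizes lets $|V(H)|$ take every integer value at or above $15$. I expect no genuine obstacle at this stage beyond the counting itself; the entire difficulty of the construction is already absorbed into the heavy-cycle analysis of Lemmas~\ref{ham}--\ref{noextend} and the simple-elimination-ordering argument of Lemma~\ref{bigstrong}, so the corollary follows at once once the vertex count is tuned.
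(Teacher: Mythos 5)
Your proposal is correct and follows essentially the same route as the paper: the corollary is obtained by combining Theorem~\ref{result} (Hamiltonian, not cycle extendible, hence not fully cycle extendible) with Lemma~\ref{bigstrong} (strongly chordal). The only difference is that you make explicit the vertex count that the paper leaves implicit --- taking $k=3$, pasting five triangles to hit $n=15$, and enlarging one pasted clique to realize every larger $n$ --- which matches exactly how the paper itself uses $\mathcal{H}_3$ elsewhere (e.g.\ the graphs $D_n$ in Section~\ref{conclusion}).
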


We noted in \cite{LS14} that bull-free chordal graphs form a subclass of strongly chordal graphs.  Let $H \in H_k$, $w$ be a vertex in the clique pasted onto $u_1x_1$, and $y$ be a vertex in the clique pasted onto 
$v_2x_2$.  Since $\{u_1, x_2, x_3, w,y\}$ 
induce a bull, 
we see that our counterexamples are not bull-free.

\begin{ques}
Are Hamiltonian bull-free chordal graphs fully cycle extendable?
\end{ques}




\section{Counterexamples satisfying stronger conditions}\label{modified}

In this section, we show how the family of graphs $\mathcal{H}_k$ defined in Section \ref{strongex} can be modified in order to answer even more open problems on extending cycles in chordal graphs.  In particular, we show that there exist counterexamples to Hendry's Conjecture which are $P_9$-free (an improvement on a result from \cite{LS14}) and counterexamples which have connectivity $k$ for every integer $k \geq 2$ (answering a question from \cite{LS14}).  We also show that Hendry's Conjecture fails even if the extendability condition is relaxed to only require that each cycle be extendable by some length in any predefined finite subset of $\Z^+$.  Finally, we examine our counterexamples from the point of view of intersection graphs.

\subsection{$P_k$-free graphs}\label{nopath}

For a graph $H \in \mathcal{H_k}$, let $H^+$ be the graph obtained by adding the edge $u_1u_3$ to the base graph $G_k$.  It is easy to check that $H^+$ is still strongly chordal and Hamiltonian.  Furthermore, the proof of Lemma \ref{noextend} applies to $G_k + u_1u_3$, and so $H^+$ is not cycle extendable.

\begin{thm}\label{P9}
For any $n \geq 15$, there exists a $P_9$-free counterexample to Hendry's Conjecture on $n$ vertices.
\end{thm}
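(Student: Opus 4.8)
The plan is to take $k = 3$ and use the graphs $H^+$ defined just above: for $H \in \mathcal{H}_3$, recall that $H^+$ is obtained from $H$ by adding the edge $u_1u_3$, and that $H^+$ has already been observed to be strongly chordal, Hamiltonian, and (via the proof of Lemma~\ref{noextend} applied to $G_3 + u_1u_3$) not cycle extendible, so each such $H^+$ is a genuine counterexample to Hendry's Conjecture. To realize every order $n \ge 15$, I would simply vary the sizes of the pasted cliques: the base graph $G_3$ has $10$ vertices and $|A_3| = 5$ heavy edges, and pasting onto a heavy edge a clique with $j$ additional vertices increases the order by $j$ for any $j \ge 0$. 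Distributing $n - 10$ extra vertices among the five cliques therefore yields a counterexample on exactly $n$ vertices for every $n \ge 10$, in particular for every $n \ge 15$. All that remains is to prove that $H^+$ is $P_9$-free, that is, that its longest induced path has at most eight vertices.

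First I would reduce the problem to the base graph. Write $X = \{x_1,x_2,x_3\}$, let $U$ denote the seven path vertices $u_1,u_2,u_3,z,v_3,v_2,v_1$, and for each heavy edge $e = aa'$ let $B_e$ be the set of new vertices of the clique pasted onto $e$, so that $B_e \cup \{a,a'\}$ induces a clique and each $b \in B_e$ satisfies $N[b] \subseteq B_e \cup \{a,a'\}$. The key claim is that in any induced path $P$ of $H^+$, every vertex of $\bigcup_e B_e$ must be an \emph{endpoint} of $P$. Indeed, if such a $b$ were an interior vertex of $P$, then its two neighbours on $P$ would both lie in $N[b] \subseteq B_e \cup \{a,a'\}$, a clique, and hence be adjacent to each other --- impossible on an induced path. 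Since a path has only two endpoints, $P$ contains at most two vertices from $\bigcup_e B_e$; deleting them leaves an induced path $P'$ with $V(P') \subseteq X \cup U$ and $|V(P)| \le |V(P')| + 2$. As $H^+[X \cup U] = G_3 + u_1u_3$, it suffices to show that $G_3 + u_1u_3$ contains no induced $P_7$.

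Next I would bound induced paths in $G_3 + u_1u_3$. Adding the edge $u_1u_3$ does not change the fact that each $x_i$ is adjacent to all nine other vertices of $G_3$, so each $x_i$ remains universal; consequently any induced path meeting $X$ has at most three vertices. An induced path avoiding $X$ lies entirely in $U$, whose induced subgraph is the path $u_1u_2u_3zv_3v_2v_1$ together with the chord $u_1u_3$. Because $u_1u_2u_3$ now forms a triangle, at most two of these three vertices can lie on an induced path, so no induced path uses all seven vertices of $U$; a short case check shows the longest one (for instance $u_2u_3zv_3v_2v_1$) has six vertices. Hence the longest induced path of $G_3 + u_1u_3$ has $\max(3,6) = 6$ vertices, and restoring the at most two deleted pendant endpoints shows that the longest induced path of $H^+$ has at most eight vertices. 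Thus $H^+$ is $P_9$-free, which completes the proof.

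The main obstacle is the endpoint claim of the second paragraph, which is exactly where the structure of the pasted cliques must be controlled; the remaining steps are short. It is worth noting that this is precisely the reason for introducing the edge $u_1u_3$: without it the seven vertices of $U$ induce a $P_7$, and prepending a vertex of $B_{x_1u_1}$ and appending a vertex of $B_{x_1v_1}$ produces an induced $P_9$, so the unmodified graphs of $\mathcal{H}_3$ are \emph{not} $P_9$-free. I would also carry out the routine verification of the degenerate sub-cases of the endpoint claim (both endpoints of $P$ lying in a common $B_e$, or an endpoint vertex of $B_e$ whose path-neighbour is another new vertex), each of which quickly forces $P$ to be too short to affect the bound.
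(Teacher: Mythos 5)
Your proof is correct in its core and follows essentially the same route as the paper: the paper's own proof simply asserts the two facts you prove in detail, namely that the longest induced path in $G_3 + u_1u_3$ has $6$ vertices and that any induced path in $H^+$ contains at most two vertices outside $V(G_3)$ (your endpoint argument, via $N[b]$ being contained in a clique, is exactly the justification the paper leaves implicit). Your closing remark that the edge $u_1u_3$ is genuinely needed --- since otherwise $U$ induces a $P_7$ that extends to an induced $P_9$ through two pasted cliques --- is also a correct and worthwhile observation.

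One detail in your order-realization step is wrong, though it does not affect the theorem as stated. You claim that pasting cliques with $j \geq 0$ new vertices yields a counterexample on every $n \geq 10$. If any heavy edge receives a trivial pasted clique ($j = 0$), the non-extendibility argument of Theorem~\ref{result} collapses for that edge: nothing forces it into a hypothetical extension cycle. Concretely, in $G_3$ itself the cycle of Lemma~\ref{long} on $V(G_3) \setminus \{z, v_3\}$ \emph{does} extend, e.g.\ to the cycle $u_1 x_1 v_1 v_2 x_2 u_2 u_3 z x_3 u_1$, and the same extension survives whenever the clique on $x_3u_3$ (say) is trivial, even if the total order is at least $15$. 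So you must insist that each of the five pasted cliques receives at least one new vertex, which is precisely why the correct threshold is $n \geq 10 + 5 = 15$ and not $n \geq 10$; with that restriction your distribution argument covers every $n \geq 15$ and the rest of your proof stands.
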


\begin{proof}
Let $H \in \mathcal{H}_3$ with $|V(H)| = n \geq 15$, and let $H^+$ be as defined above.  Note that the longest induced path in $G_3 + u_1u_3$ has $6$ vertices ($u_1u_3zv_3v_2v_1$).  Since any longest induced path in $H^+$ contains at most two vertices not in $V(G_3)$, $H^+$ must be $P_9$-free.
\end{proof}

In \cite{LS14}, it was asked to determine the smallest positive integer $k$ such that every $P_k$-free Hamiltonian chordal graph is cycle extendable.  Combined with the results of \cite{LS14}, Theorem \ref{P9} shows that the smallest such $k$ satisfies $5 \leq k \leq 8$.

\subsection{Connectivity}\label{conn}


Informally, the following construction involves taking $G_k$ and replacing each $u_i$ and $v_i$ with a clique of order $k-1$ in a particular way.  Heavy edges transform into ``heavy cliques'' of order $k$, to which we join independent sets of order $k-1$.

More precisely, let $F_1,F_2,\ldots , F_{k-1},F'_1,F'_2,\ldots , F'_{k-2}$  be isomorphic to $K_{k-1}$.  We denote their respective vertex sets by $V(
F_i) = \{ v_{i,1}, v_{i,2},\ldots , v_{i,k-1}\}$ and $V(F'_i) = \{ v'_{i,1}, v'_{i,2},\ldots , v'_{i,k-1}\}$.  Let $Q_{k-1}$ be the graph with vertices and edges as follows:
 \begin{align*}
 V(Q_{k-1})
 = &V(F_1) \cup V(F_2) \cup \ldots \cup V(F_{k-1}) \cup  V(F'_1) \cup V(F'_2) \cup \ldots \cup V(F'_{k-2}) \cup \{z,v_k \} \\ E(Q_{k-1}) = &\bigcup_{i=1}^{k-1} E(F_i) \cup \bigcup_{i=1}^{k-2}E(F'_i) \cup \{ v_{1,k-1}v_{2,1}, v_{2,k-1}v_{3,1},\ldots,v_{k-2,k-1}v_{k-1,1}\} \cup  \\ &\{ v_{k-1,k-1}z, zv_k, v_kv'_{k-2,1} \} \cup \{ v'_{k-2,k-1}v'_{k-3,1}, v'_{k-3,k-1}v'_{k-4,1},\ldots,v'_{2,k-1}v'_{1,1} \}
 \end{align*}
 Let $Z_{k-1}$ be a complete graph on vertices $\{x_1, \ldots, x_{k-1}\}$ and let $R_{k-1} = Z_{k-1} \vee Q_{k-1}$.  Note, at this point, that contracting each $F_i$ and $F'_i$ to single vertices yields $G_{k-1}$.
 
 For $1 \leq i \leq k-1$, let $T_i$ denote an independent set of vertices $\{t_{i,1},t_{i,2},\ldots,t_{i,k-1}\}$.  Similarly, for $1 \leq i \leq k-2$, let $T'_i$ denote an independent set of vertices $\{t'_{i,1},t'_{i,2},\ldots,t_{i,k-1}\}$.  Let $S_{k-1}$ be the graph obtained from $R_{k-1}$ by making each $T_i$ (respectively, $T'_i$) complete to $F_i \cup x_i$ (resp., $F'_i \cup x_i$).  To extend the point made in the previous paragraph, note that contracting each $F_i, F'_i, T_i$, and $T'_i$ to single vertices yields a graph in $\mathcal{H}_{k-1}$ (where the clique pasted to each heavy edge is a triangle).
 
 \begin{prop}
 The graph $S_{k-1}$ is chordal, Hamiltonian, and $k$-connected.
 \end{prop}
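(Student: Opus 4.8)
The plan is to verify the three properties separately, reusing the combinatorial structure of $G_{k-1}$ established in Section \ref{strongex} together with the observation that contracting each $F_i, F'_i, T_i, T'_i$ recovers a graph built from $G_{k-1}$.

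For chordality, I would first observe that $Q_{k-1}$ is a block graph: its $2$-connected blocks are exactly the cliques $F_1,\ldots,F_{k-1},F'_1,\ldots,F'_{k-2}$, while every remaining edge (the connectors $v_{i,k-1}v_{i+1,1}$ and $v'_{i,k-1}v'_{i-1,1}$, together with $v_{k-1,k-1}z$, $zv_k$, $v_kv'_{k-2,1}$) is a bridge. Since block graphs are chordal, $Q_{k-1}$ is chordal. Passing to $R_{k-1}=Z_{k-1}\vee Q_{k-1}$ only adjoins $x_1,\ldots,x_{k-1}$, each of which is universal in $R_{k-1}$; since adjoining a universal vertex cannot create an induced cycle of length at least $4$ through it, $R_{k-1}$ is chordal. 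Finally, in $S_{k-1}$ each vertex of $T_i$ (resp. $T'_i$) has neighbourhood contained in the clique $F_i\cup\{x_i\}$ (resp. $F'_i\cup\{x_i\}$), hence is simplicial, and adjoining simplicial vertices preserves chordality. Equivalently, one may exhibit a perfect elimination ordering that deletes all $T_i,T'_i$ first and then follows an elimination ordering of $R_{k-1}$.

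For Hamiltonicity, I would lift the heavy Hamiltonian cycle $C$ of $G_{k-1}$ furnished by Lemma \ref{ham}, where $F_i$ plays the role of $u_i$, $F'_i$ that of $v_i$, and $v_k$ that of the last $v$-vertex. Since $C$ uses every heavy edge, for $i\le k-2$ it traverses the segment $u_i\,x_i\,v_i$, and the key local step is to replace this by a Hamiltonian path of $F_i\cup T_i\cup\{x_i\}\cup F'_i\cup T'_i$. Because $F_i\cup T_i$ is a complete split graph with parts of equal size $k-1$, it has a Hamiltonian path from any prescribed $a\in F_i$ to any prescribed vertex of $T_i$, namely $a=c_1,t_1,c_2,t_2,\ldots,c_{k-1},t_{k-1}$; ending at a vertex of $T_i$ lets the path continue to $x_i$, and symmetrically one enters $F'_i\cup T'_i$ at a vertex of $T'_i$ and leaves at a prescribed $b\in F'_i$. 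For $i=k-1$ the same device produces a Hamiltonian path of $F_{k-1}\cup T_{k-1}\cup\{x_{k-1}\}$ ending at $x_{k-1}$. The prescribed endpoints $a,b$ are precisely the connector vertices dictated by the (unique) non-heavy edge of $C$ at $u_i$ and $v_i$; since each blown-up vertex meets at most one connector and the clique structure leaves any other internal endpoint free, these choices never conflict. The singletons $z,v_k$ and all edges of $C$ incident to some $x_j$ (which is adjacent to all of $Q_{k-1}$) lift verbatim, so the replacements splice into a single Hamiltonian cycle of $S_{k-1}$.

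For $k$-connectivity, I would first note that every vertex of $T_i$ or $T'_i$ has degree exactly $k$, so $\kappa(S_{k-1})\le k$ and it suffices to show that no $X$ with $|X|\le k-1$ is a cut. The crucial point is that $R_{k-1}-X$ is always connected: if some $x_j\notin X$ then $x_j$ is universal in $R_{k-1}$ and joins all surviving core vertices, whereas the only way to delete all $k-1$ apexes within the budget is $X=\{x_1,\ldots,x_{k-1}\}$, and then $R_{k-1}-X=Q_{k-1}$, which is connected. Each pendant vertex of $T_i$ has its full neighbourhood $F_i\cup\{x_i\}$ of size $k$, so at least one neighbour survives in $R_{k-1}-X$ and the pendant attaches to the connected core; the same holds for $T'_i$. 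Hence $S_{k-1}-X$ is connected for every $|X|\le k-1$, giving $\kappa(S_{k-1})\ge k$, and with the degree bound $\kappa(S_{k-1})=k$. I expect the Hamiltonicity step to be the main obstacle: the chordality and connectivity arguments are short and structural, but lifting the heavy cycle requires care in threading each independent set $T_i$ through its clique while the apex $x_i$ is shared between the $u_i$- and $v_i$-gadgets, and in confirming that the connector endpoints never over-constrain a single clique.
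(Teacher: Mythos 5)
Your proof is correct. The paper actually states this proposition without any proof, so there is nothing to compare against directly; but your three arguments (simplicial/universal-vertex reductions for chordality, lifting the heavy Hamiltonian cycle of $G_{k-1}$ by threading each $T_i$ alternately through its clique $F_i$, and the cutset analysis splitting on whether some $x_j$ survives) are all sound and use exactly the lifting/projection technique the paper itself employs in the proof of Lemma \ref{connectedblowup}, so this fills the omitted details in the intended way.
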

 
 \begin{proof}
 It is straightforward to confirm that $Q_{k-1}$ is chordal (as is $Z_{k-1}$), and thus $R_{k-1}$ is the join of two chordal graphs and is thus chordal itself.  Since $S_{k-1}$ is obtained from $R_{k-1}$ by clique pasting, it is also chordal.  We first note that there is a Hamiltonian cycle $C'$ in $R_{k-1}$ which is obtained from a heavy Hamiltonian cycle $C$ in $G_{k-1}$ in a natural way -- each $u_i$ and $v_j$ in $C$ is replaced with the vertices of $F_i$ and $F'_j$, respectively, in an appropriate order and each heavy edge is replaced with an appropriate edge from $Z_{k-1}$ to the corresponding copy of $F_i$ or $F_j'$ (we still call such edges ``heavy'').  Now, we obtain a Hamiltonian cycle $C''$ in $S_{k-1}$ -- for each edge in $E(F_i) \cap E(C')$ replace it with a two-edge path whose centre is a vertex from $T_i$, for each edge in $E(F'_j) \cap E(C')$ replace it with a two-edge path whose centre is a vertex from $T'_j$, and replace each heavy edge with a two-edge path containing the only remaining vertex from the set $T_i$ or $T'_j$ which is complete to that edge.  It is straightforward case analysis to confirm that there are $k$ internally disjoint paths between any two vertices, which we leave to the reader.
 \end{proof}
 
\begin{lem}\label{connectedblowup}
Let $k \geq 3$.  For $y \in \{z,v_k\}$, the graph $S_{k-1}$ has no cycle $C$ such that $V(C) = V(S) \setminus \{y\}$ but has a cycle $C'$ such that $V(C) = V(S) \setminus \{z,v_k\}$.
\end{lem}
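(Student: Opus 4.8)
I would prove the two assertions separately; both reduce to facts already established for $G_{k-1}$, of which $S_{k-1}$ is a vertex blow-up: each heavy-edge endpoint $u_i,v_i$ of $G_{k-1}$ expands to the clique $F_i,F_i'$, the apex of the triangle pasted on each heavy edge expands to the independent set $T_i,T_i'$, and $z,v_k,x_1,\dots,x_{k-1}$ remain single vertices.

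For the existence of $C'$ I would lift the cycle $C_1$ of Lemma~\ref{long} for $G_{k-1}$, which misses exactly the two central path vertices corresponding to $z$ and $v_k$. Wherever $C_1$ runs $x_i$--(triangle)--$u_i$, I replace this segment by an alternating path $x_i,t,f,t,f,\dots,t,f$ through all of $T_i$ and $F_i$: this is legal because $F_i$ is complete and every vertex of $T_i$ is adjacent to all of $F_i\cup\{x_i\}$, and since a clique may be ordered freely I can make the path leave $F_i$ precisely at the vertex carrying the connector edge used by $C_1$. Stitching the gadget-paths together in the pattern of $C_1$ and omitting $z,v_k$ then produces $C'$.

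For the non-existence of a cycle $C$ with $V(C)=V(S_{k-1})\setminus\{y\}$, $y\in\{z,v_k\}$, the plan is to contract each $F_i,F_i',T_i,T_i'$ and obtain a \emph{heavy} cycle of $G_{k-1}$ missing only the image of $y$, which Lemma~\ref{noextend} forbids. The essential point is that the contraction be clean, i.e.\ that $C$ meets each gadget $W_i=F_i\cup T_i$ (and each $W_i'=F_i'\cup T_i'$) in a single subpath. I would show this by counting: let $a_i$ be the number of $C$-edges from $T_i$ to $x_i$, let $p_i$ be the number of $C$-edges inside $F_i$, and let $b_i$ be the number of $C$-edges leaving $W_i$. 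Summing the two cycle-edges at each of the $2(k-1)$ vertices of $W_i$ gives $4(k-1)$, equal to twice the number of internal edges plus $b_i$; since $T_i$ is independent its only exit is $x_i$, and combining these bookkeeping relations yields $b_i=2(a_i-p_i)$. As $C$ is connected and $W_i$ is a proper subset all of whose vertices lie on $C$, we have $b_i\ge 2$, hence $a_i\ge 1+p_i\ge 1$: every $T_i$ sends at least one edge to $x_i$.

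Applying $a_i\ge1$ to both $T_i$ and $T_i'$ shows that for $1\le i\le k-2$ both cycle-edges at $x_i$ are consumed inside the two gadgets, so $a_i=a_i'=1$, $p_i=p_i'=0$, each gadget is a single alternating path, and $x_i$ has no further incident $C$-edge. Hence (say $y=z$, the other case being symmetric) all of $x_1,\dots,x_{k-2}$ are unavailable to $v_k$, forcing the two $C$-neighbours of $v_k$ to be $v'_{k-2,1}$ and $x_{k-1}$ and pinning down the final gadget. Contracting every $F_i,F_i',T_i,T_i'$ then turns $C$ into a simple cycle of $G_{k-1}$ that uses all heavy edges (each $x_i$ meets exactly the images of its heavy edges) and omits only the image of $y$, contradicting Lemma~\ref{noextend}. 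The main obstacle is precisely this clean-traversal claim: a priori $C$ might enter a clique $F_i$ several times, and what rules this out is that the $2(k-1)$ edges demanded by the independent set $T_i$ nearly saturate the capacity of $F_i\cup\{x_i\}$. I would also track the index shift carefully, since the contraction lands in $\mathcal{H}_{k-1}$ with base $G_{k-1}$, so Lemma~\ref{noextend} is invoked at parameter $k-1$; this requires handling the smallest value of $k$ on its own.
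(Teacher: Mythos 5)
Your proposal follows essentially the same route as the paper's proof: obtain $C'$ by lifting the cycle of Lemma~\ref{long} through the gadgets, and for the non-existence half, show that a cycle missing a single vertex of $\{z,v_k\}$ must traverse each gadget in a single spanning path that consumes $x_i$, then contract and invoke Lemma~\ref{noextend} for the base graph $G_{k-1}$. Your bookkeeping identity $b_i = 2(a_i - p_i)$ is just a more explicit form of the paper's terser count (the paper observes that the $2k-2$ cycle edges at the independent set $T_i$ nearly saturate $F_i \cup \{x_i\}$, leaving exactly two exits, and that the resulting path cannot have both ends in $F_i$ lest $x_i$ be unusable by the twin gadget), so the two arguments are interchangeable.

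One remark on the caveat you raise at the end: it is not a removable technicality, and you are right to flag it where the paper does not. The contraction lands in $G_{k-1}$, and Lemma~\ref{noextend} genuinely requires its parameter to be at least $3$; at parameter $2$ it is false, since $u_1x_1v_1v_2x_2u_2u_1$ is a heavy cycle of $G_2$ avoiding only $z$. Moreover, that cycle lifts (by exactly the gadget-path substitution you use to build $C'$) to a cycle of $S_2$ on $V(S_2)\setminus\{z\}$, so for $k=3$ the non-existence assertion of the lemma is actually false, not merely unproved. Hence the base case you defer cannot be handled at all: both your argument and the paper's (which silently applies ``the arguments in Section 2'' to $G_{k-1}$ without checking the index) establish the lemma only for $k \geq 4$, and the statement should be read with that restriction. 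Subject to this shared boundary issue, your proof is correct and, in the counting step, somewhat more rigorous than the published one.
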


\begin{proof}
It is straightforward to obtain $C'$ be extending the cycle in $H$ which avoids $\{v_k, z\}$ given by Lemma \ref{long}.  To see that this is possible, note that if $x$ is a degree two vertex in $H$ which is part of a clique pasted to a heavy edge, then the two edges in $C'$ incident to $x$ can be replaced in $C$ by a path tracing all vertices in the appropriate $G_i \cup T_i \cup \{x_i\}$ or $G'_i \cup T'_i \cup \{x'_i\}$.  An edge of the form $u_iu_{i+1}$ is replaced by the edge $v_{i,k-1}v_{i+1,1}$ (similar for $v_iv_{i+1}$ in $H$), and any edge from some $u_i$ or $v_i$ to some $x_j$ can replaced with an edge from $x_j$ to whichever vertex in $G_i$ or $G'_i$ is the end of the path tracing the vertices of $T_i$ noted above.
Now, assume that $S$ has a cycle $C$ such that $V(C) = V(S) \setminus \{y\}$ for either choice of $y \in \{z,v_k\}$.  Consider the portion of $C$ induced by $G_i \cup T_i \cup \{x_i\}$ for some $i$.  There are $2k-2$ edges of $C$ incident with the independent set $T_i$, and all must have their other ends in $G_i \cup \{x_i\}$.  This means that $C$ has precisely two edges each having one end in $G_i \cup \{x_i\}$ and other ends outside of $G_i \cup T_i \cup \{x_i\}$, and that the portion of $C$ induced by $G_i \cup T_i \cup \{x_i\}$ is a path $P_i$.  A similar argument holds for $G'_i \cup T'_i \cup \{x_i\}$, yielding a path through those vertices which we denote $P'_i$.  Note further that $P_i$ cannot begin and end at vertices in $G_i$, as this would leave $x_i$ unavailable for use in $P'_i$ (and conversely).  We construct a corresponding cycle $C^*$ in $G_{k-1}$ by replacing each $P_i$ with the heavy edge $u_ix_i$ and each $P'_i$ with the heavy edge $v_ix_i$.  By the arguments in Section $2$ it is not possible to have a cycle in $G_{k-1}$ which uses every heavy edge and misses only a vertex in $\{v_k,z\}$, contradicting our initial claim that $C$ exists.
\end{proof}

As consequence of Lemma \ref{connectedblowup}, we obtain the following:

\begin{thm}
For any $k \geq 2$, there exists a counterexample to Hendry's Conjecture with connectivity $k$.
\end{thm}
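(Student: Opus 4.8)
The plan is to derive the result almost immediately from the Proposition and Lemma~\ref{connectedblowup} in the range $k \geq 3$, and to handle the base case $k = 2$ separately with the family $\mathcal{H}_3$ of Section~\ref{strongex}, since the blow-up construction degenerates there. For $k \geq 3$ I would take $S = S_{k-1}$; the Proposition already supplies that $S$ is chordal, Hamiltonian, and $k$-connected, so the only things left to check are that $S$ is not cycle extendible and that its connectivity is \emph{exactly} $k$.

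For non-extendibility, let $C'$ be the cycle with $V(C') = V(S) \setminus \{z, v_k\}$ guaranteed by Lemma~\ref{connectedblowup}. Any extension of $C'$ would be a cycle on $|V(C')| + 1$ vertices, that is, a cycle whose vertex set is $V(S) \setminus \{y\}$ for exactly one $y \in \{z, v_k\}$; but Lemma~\ref{connectedblowup} forbids precisely such a cycle. Hence $C'$ cannot be extended and $S$ violates Hendry's condition. For the connectivity, the Proposition gives $\kappa(S) \geq k$, so I need only the matching upper bound, which I would obtain from $\kappa(S) \leq \delta(S)$ by exhibiting a vertex of degree $k$: by construction each $t_{i,j} \in T_i$ is complete to $F_i \cup \{x_i\}$ and adjacent to nothing else, so $\deg_S(t_{i,j}) = |F_i| + 1 = k$. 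Equivalently, $F_i \cup \{x_i\}$ is a $k$-vertex cut separating the (nonempty) independent set $T_i$ from the rest of $S$. Thus $\kappa(S) = k$.

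For $k = 2$ I would instead take any $H \in \mathcal{H}_3$ on $n \geq 15$ vertices. By Theorem~\ref{result} such an $H$ is Hamiltonian and not cycle extendible, and by Lemma~\ref{bigstrong} it is (strongly) chordal, so it is a valid counterexample. Being Hamiltonian forces $\kappa(H) \geq 2$; and since $n \geq 15 > |V(G_3)|$, at least one heavy edge of $G_3$ carries a pasted clique with an interior vertex, and the two endpoints of that edge form a $2$-cut (the interior vertices of the pasted clique reach the rest of $H$ only through those two endpoints). This gives $\kappa(H) \leq 2$, hence $\kappa(H) = 2$.

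The only step demanding genuine care is pinning down the connectivity as \emph{exactly} $k$ rather than merely at least $k$: the Proposition asserts only $k$-connectedness, so one must identify the low-degree vertices of $S_{k-1}$—the blown-up independent-set vertices $t_{i,j}$—and verify that their neighbourhood has size precisely $k$. Everything else is bookkeeping once Lemma~\ref{connectedblowup} is available.
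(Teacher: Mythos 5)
Your proposal is correct and takes essentially the same route as the paper: the paper states this theorem as an immediate consequence of the Proposition on $S_{k-1}$ (chordal, Hamiltonian, $k$-connected) together with Lemma~\ref{connectedblowup}, exactly as you do. Your writeup merely makes explicit two details the paper leaves implicit --- that the connectivity is \emph{exactly} $k$ (via the degree-$k$ vertices $t_{i,j}$, whose neighbourhood $F_i \cup \{x_i\}$ has size $k$) and that the case $k=2$ is covered by the Section~\ref{strongex} counterexamples in $\mathcal{H}_3$ --- both of which are handled correctly.
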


\subsection{$S$-extendability}\label{length}

Let $S \subset \Z^+$ be finite.  A cycle $C$ in a graph $G$ is {\em $S$-extendable} if there exists a cycle $C'$ in $G$ such that $V(C) \subset V(C')$ and $|C'| - |C| \in S$; $G$ is $S$-cycle extendable if every cycle of $G$ that could be $S$-extendable is $S$-extendable.  This idea was introduced and studied first in \cite{BB11}.  Clearly, being ${1}$-cycle extendable is equivalent to the original definition of being cycle extendable.  Arangno \cite{A14} conjectured that every Hamiltonian chordal graph is $\{1,2\}$-cycle extendable.  We refute this with the following theorem.

\begin{thm}
For any finite $S \subset \Z^+$, there exist infinitely many graphs which are chordal and Hamiltonian but not $S$-cycle extendable.
\end{thm}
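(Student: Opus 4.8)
The plan is to lengthen the central ``gap'' of the construction in Section~\ref{strongex} so that the cycle avoiding it can only be re-closed by swallowing the \emph{entire} gap at once, producing a jump strictly larger than every element of $S$. Write $M = \max S$ and fix integers $k \ge 3$ and $m \ge M+1$. I would define a base graph $G_{k,m} = K_k \vee P$ exactly as $G_k$, except that the central pair $\{z,v_k\}$ is replaced by a block $B = \{z_1,\ldots,z_m\}$ of $m$ consecutive path vertices carrying \emph{no} heavy edge; thus $P$ is the path $u_1,\ldots,u_k,z_1,\ldots,z_m,v_{k-1},\ldots,v_1$, the clique is $\{x_1,\ldots,x_k\}$, and the heavy edges remain $\{x_iu_i : 1\le i\le k\}\cup\{x_iv_i : 1\le i\le k-1\}$ (so $G_k$ is the case $m=2$). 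As in Section~\ref{strongex}, $K_k\vee P$ is chordal, since an induced cycle of length at least $4$ would have to avoid the dominating clique $K_k$ and hence lie inside the path $P$, which is impossible; pasting a distinct clique onto each heavy edge preserves chordality, yielding a family $\mathcal{H}_{k,m}$, and letting those cliques grow produces infinitely many graphs for each fixed $k,m$.

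Next I would record the two cycles needed. A routine modification of Lemma~\ref{ham} gives a heavy Hamiltonian cycle of $G_{k,m}$ in which $B$ appears as the segment $u_kz_1z_2\cdots z_mv_{k-1}$; replacing each heavy edge by a Hamiltonian path of its pasted clique lifts this to a Hamiltonian cycle of any $H\in\mathcal{H}_{k,m}$. Since the subgraph of $G_{k,m}$ induced by the non-block vertices is isomorphic to the subgraph of $G_k$ induced by $V(G_k)\setminus\{z,v_k\}$, Lemma~\ref{long} supplies a heavy cycle $C_1$ on $V(G_{k,m})\setminus B$; lifting it as in the proof of Theorem~\ref{result} gives a cycle $C$ in $H$ with $V(C)=V(H)\setminus B$. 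Because $|V(H)|-|V(C)| = m > M \ge \min S$, this $C$ could be $S$-extendible. Exactly as in Theorem~\ref{result}, any cycle $C'$ with $V(C)\subset V(C')$ must cover every vertex of every pasted clique, hence traverses each clique by a Hamiltonian path between its attachment vertices, and therefore projects to a heavy cycle of $G_{k,m}$ meeting $B$ in the nonempty set $V(C')\setminus V(C)$. So the whole theorem reduces to an all-or-nothing claim about heavy cycles.

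The crux, and the step I expect to be the main obstacle, is to show that for $k\ge 3$ \emph{every heavy cycle of $G_{k,m}$ meets $B$ in either all of $B$ or none of it}, generalizing Lemma~\ref{noextend}. Granting this, $V(C')\setminus V(C)=B$, so $|V(C')|-|V(C)| = m > M = \max S$ lies outside $S$; hence $C$ has no $S$-extension, $H$ is not $S$-cycle extendible, and varying the clique sizes finishes the proof. To prove the claim I would fix a heavy cycle $D$. Every $x_i$ with $i\le k-1$ carries the two heavy edges $x_iu_i$ and $x_iv_i$, so it is saturated and offers $D$ no edge into $B$; thus the only edges between $B$ and the rest available to $D$ are the two path-edges $u_kz_1$ and $z_mv_{k-1}$ together with \emph{at most one} edge $x_kz_j$ (beyond its forced heavy edge $x_ku_k$, the vertex $x_k$ has a single free incidence). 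As $D\cap B$ is a disjoint union of sub-paths of $z_1\cdots z_m$, the number of these boundary edges used is even, hence $0$ or $2$: the value $0$ gives $D\cap B=\emptyset$, and the pair $\{u_kz_1,\,z_mv_{k-1}\}$ gives the segment $z_1\cdots z_m$, i.e.\ $D\cap B=B$.

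It then remains to rule out the two possibilities that use $x_kz_j$. Pairing $x_kz_j$ with $u_kz_1$ makes $z_1$ and $z_j$ the ends of the block-segment, joined to $u_k$ and $x_k$ respectively; together with the heavy edge $x_ku_k\in D$ this closes the short cycle $u_kz_1\cdots z_jx_ku_k$. Pairing $x_kz_j$ with $z_mv_{k-1}$ instead puts the segment $u_k-x_k-z_j\cdots z_m-v_{k-1}-x_{k-1}$ into $D$; since $u_kz_1\notin D$ (it is not one of the two chosen boundary edges) and the remaining neighbours of $u_k$ are saturated $x_i$'s, the second edge at $u_k$ must be $u_ku_{k-1}$, and then the heavy edges at $x_{k-1}$ force $x_{k-1}u_{k-1}$ and $x_{k-1}v_{k-1}$, closing the short cycle $u_kx_kz_j\cdots z_mv_{k-1}x_{k-1}u_{k-1}u_k$. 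Each of these short cycles omits $x_1$, so neither can be the heavy cycle $D$ when $k\ge 3$. Hence $D$ never uses an edge $x_kz_j$, and $D\cap B\in\{\emptyset,B\}$, establishing the claim and with it the theorem.
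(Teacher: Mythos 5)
Your proof is correct and takes essentially the same approach as the paper: the paper likewise lengthens the heavy-edge-free middle of the path (inserting $\max S + 1$ vertices between $v_k$ and $z$, all made complete to $\{x_1,\ldots,x_k\}$), takes the lifted cycle avoiding that middle block, and appeals to ``an argument similar to Lemma \ref{noextend}'' for precisely the all-or-nothing claim you prove via the saturation/parity analysis of boundary edges. If anything your write-up is more careful than the paper's sketch, which states the spanning cycle as avoiding only the newly inserted vertices when it must also avoid $z$ and $v_k$ (a heavy cycle through $z$ and $v_k$ but missing the new vertices would force three cycle edges at $x_k$), a slip your reparametrization of the block avoids entirely.
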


\begin{proof}
Let $m = \max\{s \,:\, s \in S\} +1$.  Let $G_{k,m}$ denote the graph obtained from $G_k$ by replacing in $G_k$ the edge $v_kz$ with the path $v_k,v_{k+1}\cdots v_{k+m}z$ and making all new vertices complete to $\{x_1, \ldots, x_k\}$.  Let $\mathcal{H}_{k,m}$ denote the family of graphs which can be obtained from $G_{k,m}$ by pasting a clique onto each heavy edge of $G_{k,m}$, and let $H$ be any graph of $\mathcal{H}_{k,m}$.  As before, $H$ is Hamiltonian and has a cycle $C$ that spans $V(H) \setminus \{v_{k+1}, \ldots, v_{k+m}\}$.  By an argument similar to Lemma 2.5, $C$ cannot be extended to a cycle containing any of $\{v_{k+1}, \ldots, v_{k+m}\}$, and hence $C$ cannot be extended by any length in $S$.
\end{proof}

\subsection{Tree structure}\label{treestructure}

One well known characterization of chordal graphs, given by Gavril \cite{G74}, is that $G$ is chordal if and only if there exists a tree $T$ and a collection of subtrees of $T$, say $\mathcal{T}$, such that $G$ is the intersection graph of $\mathcal{T}$.  Note that $\mathcal{T}$ gives rise to an optimal tree decomposition of $G$, where the bags are precisely the maximal sets of vertices from $G$ which together form a subtree in the collection $\mathcal{T}$.  We thus call $T$ a
\textit{host tree} of $G$.
It has been shown that a Hamiltonian chordal graph $G$ is cycle extendable if 
it admits a host tree that is
a path \cite{AS06, CFGJ06} or the subdivision of a star \cite{ABS13} (that is, $G$ is a linear interval graph or a spider intersection graph, respectively).

Given a tree $T$, we call a vertex $v \in V(T)$ a {\em branch vertex} if $d_T(v) \geq 3$.  The aforementioned results on host trees show that if $G$ is a Hamiltonian chordal graph with host tree $T$, and if $T$ has at most one branch vertex, then $G$ is cycle extendable.  It easily follows that if $G$ admits a tree decomposition with host tree $T$ having at most $3$ leaves, then $G$ is cycle extendable.  Now, given a chordal graph $G$, $G$ may have many representations as the intersection graph of some host tree $T$ and collection of subtrees $\mathcal{T}$.  However, we may consider a host tree for $G$ that is minimal in terms of either the number of branch vertices or the number of leaves it contains.  The following theorems shows that the aforementioned results are almost best possible in this regard.

\begin{figure}[h!]
\begin{center}
\scalebox{1.4}{
\begin{tikzpicture}
\clip(-0.5,-0.5) rectangle (7.5,1.5);

\draw (0,0) -- (7,0);
\draw (2,0) -- (2,1);
\draw (3,0) -- (3,1);
\draw (5,0) -- (5,1);

\draw [color=purple] (-0.1,-0.1) -- (7.1,-0.1);

\draw [color=blue] (0.9,0.1) -- (6.1,0.1);
\draw [color=blue] (1.9,0.1) -- (1.9,1.1);
\draw [color=blue] (4.9,0.1) -- (4.9,1.1);

\draw [color=red] (0.9,0.2) -- (6.1,0.2);
\draw [color=red] (2.9,0.2) -- (2.9,1.1);

\draw [color=green] (0.9,-0.2) -- (2.1,-0.2);
\draw [color=green] (2.1,-0.2) -- (2.1,1.1);

\draw [color=yellow] (6.1,-0.2) -- (5.1,-0.2);
\draw [color=yellow] (5.1,-0.2) -- (5.1,1.1);

\draw [color=pink] (1.9,-0.3) -- (3.1,-0.3);
\draw [color=pink] (3.1,-0.3) -- (3.1,1.1);

\draw [color=brown] (-0.1,-0.3) -- (1.1,-0.3);

\draw [color=brown] (2.9,-0.2) -- (4.1,-0.2);

\draw [color=brown] (3.9,-0.3) -- (5.1,-0.3);

\draw [color=brown] (5.9,-0.3) -- (7.1,-0.3);

\draw [color=black] (2.2,1.1) -- (2.2,0.9);
\draw [color=black] (3.2,1.1) -- (3.2,0.9);
\draw [color=black] (5.2,1.1) -- (5.2,0.9);

\draw [color=black]  (-0.1,-0.4) -- (0.1,-0.4);
\draw [color=black]  (7.1,-0.4) -- (6.9,-0.4);

\begin{scriptsize}

\fill [color=black] (0,0) circle (2pt);
\fill [color=black] (1,0) circle (2pt);
\fill [color=black] (2,0) circle (2pt);
\fill [color=black] (3,0) circle (2pt);
\fill [color=black] (4,0) circle (2pt);
\fill [color=black] (5,0) circle (2pt);
\fill [color=black] (6,0) circle (2pt);
\fill [color=black] (7,0) circle (2pt);

\fill [color=black] (2,1) circle (2pt);
\fill [color=black] (3,1) circle (2pt);
\fill [color=black] (5,1) circle (2pt);

\end{scriptsize}
\end{tikzpicture}
}
\caption{A host tree and subtrees whose intersection graph is in $\mathcal{H}_3$}\label{tree}
\end{center}
\end{figure}
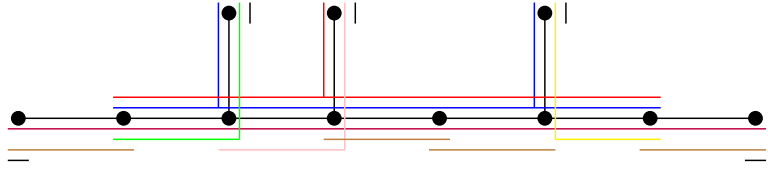


\begin{thm}
For every $t \geq 5$, there exists a counterexample to Hendry's Conjecture, say $G$, having a host tree $T$ with exactly $t-2$ branch vertices and $t$ leaves.
\end{thm}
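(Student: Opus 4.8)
The plan is to exhibit, for a suitable member $H$ of the family $\mathcal{H}_k$, an explicit subtree intersection (clique) model whose decomposition tree has the prescribed shape; by Theorem \ref{result} such an $H$ is already a Hamiltonian chordal graph that is not cycle extendible, so only the tree structure needs to be controlled. First I would record the clique tree of the base graph $G_k$: since every maximal clique of $G_k$ consists of $K_k = \{x_1,\dots,x_k\}$ together with one edge of the path $u_1\cdots u_k\,z\,v_k\cdots v_1$, the maximal cliques are naturally indexed by the $2k$ path-edges and form a single path $P$ in the clique tree (reflecting that $G_k$ is a linear interval graph). The key local observation is the dictionary between the two operations in the construction: pasting a clique onto an edge $e=ab$ of $G_k$ introduces exactly one new maximal clique, which in the clique tree becomes a pendant node attached at any node of $P$ containing both $a$ and $b$.

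With this dictionary in hand, the heavy edges split into two types. The two \emph{endpoint} heavy edges $x_1u_1$ and $x_1v_1$ involve the path-endpoints $u_1,v_1$, each lying in a single maximal clique at an end of $P$; pasting there simply lengthens $P$ and contributes a leaf but no branch vertex. Each of the remaining $2k-3$ \emph{interior} heavy edges $x_iu_i$ ($2\le i\le k$) and $x_iv_i$ ($2\le i\le k-1$) involves an interior path vertex lying in two consecutive cliques of $P$; choosing distinct attachment nodes for distinct heavy edges (possible since the relevant pairs of cliques are pairwise distinct), each such pasting turns a degree-$2$ node of $P$ into a degree-$3$ branch vertex and adds one pendant leaf. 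Thus the decomposition tree so obtained has exactly $2k-1$ leaves, exactly $2k-3$ branch vertices, and every branch vertex has degree exactly $3$. This settles every odd value $t = 2k-1 \ge 5$, and the identity $\sum_{v}(d_T(v)-2)=-2$ confirms that when all branch vertices have degree $3$ the number of branch vertices is exactly the number of leaves minus $2$, matching the statement.

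To reach the even values $t=2k\ge 6$ I would augment the base graph by one extra path vertex $u_0$, made adjacent to $u_1$ and to all of $K_k$, and then paste cliques on the same heavy edge set $A_k$. The effect on the clique tree is that $u_1$ now lies in two consecutive cliques, so the formerly endpoint heavy edge $x_1u_1$ becomes an interior one and its pasted clique now creates a branch vertex, while the new clique containing $u_0$ supplies the replacement leaf at that end of $P$. This converts one leaf-without-branch into a branch-with-leaf, yielding $2k$ leaves and $2k-2$ branch vertices, again all of degree $3$. I would then check that this augmented graph remains a valid counterexample: it is still chordal (the clique-tree model above witnesses this) and Hamiltonian (extend the heavy Hamiltonian cycle of Lemma \ref{ham} through $u_0$), and the obstruction to cycle extension lives entirely around $\{z,v_k\}$, so the non-extendibility reasoning of Lemma \ref{noextend} and Theorem \ref{result} is unaffected by the far-away vertex $u_0$.

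The main obstacle is bookkeeping rather than a single hard idea: one must verify that the chosen pendant attachments sit at pairwise distinct interior nodes, so that no node is forced above degree $3$ (which would spoil the exact equality between branch vertices and leaves minus $2$), and one must re-run the non-extendibility verification for the augmented even-$t$ graph to confirm that introducing $u_0$ creates no new extension of the critical cycle avoiding $\{z,v_k\}$. Both are routine given the structural dictionary between clique-pasting and pendant attachment, but they are the points where care is required.
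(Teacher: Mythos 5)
Your odd-$t$ construction is sound: the clique tree of $G_k$ is a path of $2k$ nodes, each pasted clique becomes a pendant node, and choosing distinct attachment nodes gives $2k-1$ leaves and $2k-3$ branch vertices. This is essentially the paper's own argument (the paper exhibits the same caterpillar with explicit subtrees rather than via clique trees). The gap is in your even-$t$ step. Adding a path vertex $u_0$ adjacent to $u_1$ and to all of $K_k$, and then pasting cliques on all of $A_k$, destroys Hamiltonicity whenever $k$ is odd. Indeed, any Hamiltonian cycle of the pasted graph must traverse each pasted clique's interior, which forces the contracted cycle in the base graph to use every heavy edge; hence each $x_i$ with $i \leq k-1$ has both cycle edges prescribed ($x_iu_i$ and $x_iv_i$), so $u_0$, whose neighbours are $u_1, x_1, \ldots, x_k$, is forced to use $u_0u_1$ and $u_0x_k$, which in turn saturates $u_1$ and $x_k$. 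For $k=3$ (your $t=6$) this is already fatal: after these forced edges, $v_1v_2$ is forced, and then $v_3$ has only one unsaturated neighbour left ($z$), so no Hamiltonian cycle exists. In general, the forced edges leave fragments $v_1x_1u_1u_0x_ku_k$ and $u_ix_iv_i$ ($2 \leq i \leq k-1$) plus $z, v_k$, linkable only by path edges, and a parity check shows these close into a single cycle if and only if $k$ is even. So your construction covers $t \equiv 0 \pmod 4$ but fails for $t \equiv 2 \pmod 4$, including the smallest even case $t=6$. (A secondary slip: even when $k$ is even, no heavy cycle avoiding exactly $\{z,v_k\}$ exists in the augmented graph, by the same fragment analysis; the non-extendible cycle must also avoid $u_0$, and one must separately check $u_0$ cannot be inserted into it --- that check does succeed, so this part is repairable.)

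The paper's even case sidesteps all of this: instead of lengthening the path, it pastes one extra clique $X$ of order at least $k+3$ onto the clique $\{x_1, \ldots, x_k, z, v_k\}$ of $G_k$. Because the heavy Hamiltonian cycle of Lemma \ref{ham} contains the edge $zv_k$, it can be rerouted through $X$, so Hamiltonicity survives for every $k \geq 3$; the non-extendible cycle avoiding $\{z,v_k\}$ is unaffected since no two vertices of $\{x_1,\ldots,x_k\}$ are consecutive on it; and in the decomposition tree the new clique contributes exactly one new leaf at the unique non-branching interior node $p_{k+1}$, yielding $2k$ leaves and $2k-2$ branch vertices, uniformly in $k$. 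If you wish to keep your path-lengthening idea, you would have to split into cases: attaching the extra vertex on the $v$-side (a vertex $v_0$ adjacent to $v_1$ and to $K_k$) repairs the Hamiltonicity parity exactly when $k$ is odd, but then the whole non-extendibility verification must be redone for that variant; the paper's single pasting avoids the case distinction altogether.
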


\begin{proof}
We first show that if $H \in \mathcal{H}_k$ ($k \geq 3$), then there is a host tree for $H$ with $2k-1$ leaves and $2k-3$ branch vertices.
Recall that $H$ is constructed from $G_k = K_k \vee P_{2k+1}$, where the vertices of the $K_k$ are denoted $\{x_1, x_2, \ldots, x_k\}$ and the path $P_{2k+1}$ has vertices (in order) $\{u_1, u_2, \ldots u_k, z, v_k, \ldots, v_2, v_1\}$.

Let $P = p_1p_2\cdots p_{2k}$ be a path.  To each $p_i$ we attach a single leaf $q_i$, except for $p_{k+1}$ which remains a degree $2$ vertex.  Let the resulting tree be $T$; we claim that $T$ is a host tree for $H$.  It is easy to verify that $G_k$ is the intersection graph of the subtrees induced by the following sets of vertices in $T$:
	\begin{compactitem}[\label={}]
	\begin{multicols}{3}
	\item $u_1 = \{p_1\} \cup \{q_1\} $
	\item $u_2 = \{p_1,p_2\} \cup \{q_2\} $
	\item $\vdots $
	\item $u_k = \{p_{k-1},p_k\} \cup \{q_k\} $
	\item $z = \{p_k,p_{k+1}\} $
	\item $v_k = \{p_{k+1},p_{k+2}\} $
	\item $v_{k-1} = \{p_{k+2},p_{k+3}\} \cup \{q_{k+2}\} $
	\item $\vdots $
	\item $v_2 = \{p_{2k-1},p_{2k}\} \cup \{q_{2k-1}\} $
	\item $v_1 = \{p_{2k}\} \cup \{q_{2k}\}$
	\item $x_1 = V(P) \cup \{q_1,q_{2k}\} $
	\item $x_2 = V(P) \cup \{q_2,q_{2k-1}\} $
	\item $\vdots $
	\item $x_{k-1} = V(P) \cup \{q_{k-1},q_{k+2}\} $
	\item $x_k = V(P) \cup \{q_k\} $
	\end{multicols}
	\end{compactitem}
Figure \ref{tree} depicts this construction for $k=3$.  Now, we note that for every leaf $q_i$ of $T$, exactly two of the subtrees we have defined intersect at $q_i$ and that these are the ends of a heavy edge of $G_k$.  It is then easy to obtain $H$ as an intersection model; if a copy of $K_r$ is being pasted on to a heavy edge, we add $r-2$ distinct copies of the appropriate $q_i$ to the intersection model.  Clearly $T$ has $2k-1$ leaves and, since every $p_i$ is a branch vertex in $T$ except for $p_1,p_{k+1},$ and $p_{2k}$, $T$ has $2k-3$ branch vertices.

We now modify the construction for $\mathcal{H}_k$ ($k \geq 3$) to account for the case when $t$ is even.  From the graph $G_k$, paste a clique onto each heavy edge.  In addition, we paste a clique $X$ of order at least $k+3$ onto the clique $\{x_1, x_2, \ldots, x_k, z, v_k\}$.  We call the class of graphs which can obtained in this way $\mathcal{J}_k$ (note that such graphs are still strongly chordal).  Let $J \in \mathcal{J}_k$.  Recalling that the heavy Hamiltonian cycle shown to exist in $G_k$ contains the edge $zv_k$, it follows that $J$ is Hamiltonian.  It is easy to verify that $J$ also contains a non-extendable cycle $C$ whose vertices are $V(G_k) \setminus \{z,v_k\}$.  We have only now to construct the host tree.  To do this, we modify the construction above by adding the leaf $q_{k+1}$ to the tree, adjacent only to $p_{k+1}$, and modify the definitions of the vertices of $G_k$ as follows:
	\begin{compactitem}[\label={}]
	\begin{multicols}{3}
	\item $u_1 = \{p_1\} \cup \{q_1\} $
	\item $u_2 = \{p_1,p_2\} \cup \{q_2\} $
	\item $\vdots $
	\item $u_k = \{p_{k-1},p_k\} \cup \{q_k\} $
	\item $z = \{p_k,p_{k+1},q_{k+1}\} $
	\item $v_k = \{p_{k+1},p_{k+2},q_{k+1}\} $
	\item $v_{k-1} = \{p_{k+2},p_{k+3}\} \cup \{q_{k+2}\} $
	\item $\vdots $
	\item $v_2 = \{p_{2k-1},p_{2k}\} \cup \{q_{2k-1}\} $
	\item $v_1 = \{p_{2k}\} \cup \{q_{2k}\}$
	\item $x_1 = V(P) \cup \{q_1,q_{k+1},q_{2k}\} $
	\item $x_2 = V(P) \cup \{q_2,q_{k+1},q_{2k-1}\} $
	\item $\vdots $
	\item $x_{k-1} = V(P) \cup \{q_{k-1},q_{k+1},q_{k+2}\} $
	\item $x_k = V(P) \cup \{q_k,q_{k+1}\} $
	\end{multicols}
	\end{compactitem}
For cliques pasted onto heavy edges, we add copies of the appropriate $q_i$'s ($i \neq k+1$) as before.  For each vertex in $X \setminus \{x_1, x_2, \ldots, x_k, z, v_k\}$, we use $q_{k+1}$ to represent it in the intersection model.  The resulting tree has $2k$ leaves and $2k-2$ branch vertices.
\end{proof}

We pose the following natural problem

\begin{ques}
Does there exist a counterexample to Hendry's Conjecture $G$ admitting a tree decomposition with a host tree having four leaves, but not admitting a tree decomposition with host tree having three leaves?  If not, is there a counterexample admitting a tree decomposition with a host tree having two branch vertices, but not admitting a tree decomposition with host tree having one branch vertex?
\end{ques}

In addition, we note that every counterexample that we have constructed has a host tree whose maximum degree is $3$.  As mentioned, tree decompositions are certainly not unique; ${H}_{3}$ has another decomposition tree with a vertex of degree $4$, which can be obtained by deleting $p_{3} q_{3}$ and adding $p_{2} q_{3}$.  However, one may consider among all tree decompositions of a graph those host trees having smallest possible maximum degree $\Delta(T)$. If $\min\{\Delta(T) \,:\, (T,\mathcal{T}) \textrm{ is a tree decomposition for $G$}\} = 2$, then $G$ is a linear interval graph and thus satisfies Hendry's Conjecture.  If $\min\{\Delta(T) \,:\, (T,\mathcal{T}) \textrm{ is a tree decomposition for $G$}\} = 3$, then $G$ may satisfy Hendry's Conjecture (if it is a spider intersection graph) or not (if $G \in \mathcal{H}_k$).  This leads us to the following question:

\begin{ques}
Suppose $G$ is a Hamiltonian chordal graph.  Is $G$ cycle extendable if $\min\{\Delta(T) \,:\, (T,\mathcal{T}) \textrm{ is a tree decomposition for $G$}\} = 4$?
\end{ques}








\section{Concluding remarks}\label{conclusion}

We considered an extremal problem related to Hendry's Conjecture, motivated by the following theorem due to Hendry (which is a generalization of classic results of Ore \cite{O61} and Bondy \cite{B72}):

\begin{thm}[Hendry \cite{H90}]\label{dense}
Let $G$ be a graph with $|V(G)| = n$.  If $|E(G)| \geq {n-1 \choose 2} + 1$, then either $G$ is fully cycle extendable or $G$ is isomorphic to one of the following graphs: 
	\begin{compactitem}
	\item $K_1 \vee \left( K_1 \cup K_{n-2} \right)$
	\item $K_2 \vee \overline{K_3}$
	\item $\overline{K_2} \vee \left( K_1 \cup K_{n-3} \right)$
	\end{compactitem}
\end{thm}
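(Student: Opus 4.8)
The plan is to argue through the complement. Writing $m = |E(\overline{G})|$, the hypothesis $|E(G)| \geq \binom{n-1}{2}+1$ is exactly $m \leq n-2$, so $G$ is at most $n-2$ edges short of $K_n$. Two consequences drive the argument. First, for any two distinct vertices $u,v$ the non-edges meeting $u$ and those meeting $v$ can overlap only in the single pair $uv$, so $(n-1-d(u)) + (n-1-d(v)) - 1 \leq m \leq n-2$, giving $d(u)+d(v) \geq n-1$ for \emph{every} pair; in particular at most one vertex has small degree. Second, I will use the elementary insertion criterion: a non-Hamiltonian cycle $C$ can be extended by splicing a vertex $w \notin V(C)$ into an edge $xy \in E(C)$ as soon as $x,y \in N(w)$. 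Since such an insertion is one way to extend, if $C$ admits no extension at all then for every $w \notin V(C)$ the set $N(w) \cap V(C)$ contains no two cyclically consecutive vertices, hence is independent along $C$, and so $w$ has at least $\lceil |C|/2 \rceil$ non-neighbours on $C$.

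I would first dispose of the triangle condition. If some vertex $v$ lies in no triangle then $N(v)$ is independent, which creates $\binom{d(v)}{2}$ non-edges inside $N(v)$ in addition to the $n-1-d(v)$ non-edges at $v$, so $\binom{d(v)}{2} + (n-1-d(v)) \leq m \leq n-2$. For $d(v) \geq 3$ the left side is at least $n-1$, a contradiction, so $d(v) \in \{1,2\}$. If $d(v)=1$ then all $n-2$ non-edges are incident to $v$, forcing $G-v \cong K_{n-1}$ and hence $G \cong K_1 \vee (K_1 \cup K_{n-2})$; if $d(v)=2$ with neighbours $a_1,a_2$, then the $n-3$ non-edges at $v$ together with the non-edge $a_1a_2$ exhaust the budget, forcing everything else complete and $G \cong \overline{K_2} \vee (K_1 \cup K_{n-3})$. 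Thus, unless $G$ is the first or third exceptional graph, every vertex lies in a triangle, and from here I assume this.

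Next I would handle a non-Hamiltonian cycle $C$ of length $\ell$ via the counting bound above: if $C$ fails to extend, the $n-\ell$ vertices outside $C$ contribute at least $(n-\ell)\lceil \ell/2 \rceil$ distinct non-edges, so $(n-\ell)\lceil \ell/2 \rceil \leq n-2$. A short computation shows this inequality fails for all $3 \leq \ell \leq n-3$, and also for $\ell = n-2$ when $n$ is odd, so cycles of those lengths always extend. When $\ell = n-2$ and $n$ is even the inequality holds only with equality, which forces $V(C)$ to induce a clique with each outside vertex having at least two neighbours in it; rerouting a Hamiltonian path of that clique between two neighbours of an outside vertex then extends $C$. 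This reduces the whole problem to $\ell = n-1$.

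The crux is the case $\ell = n-1$: here an extension of $C$ is a cycle on all of $V(G)$, so $C$ extends if and only if $G$ is Hamiltonian. It therefore remains to show that, under the standing hypotheses (every vertex in a triangle and $m \leq n-2$), $G$ is Hamiltonian unless $G \cong K_2 \vee \overline{K_3}$, and I expect this to be the main obstacle. Since our bound $d(u)+d(v) \geq n-1$ is exactly one short of Ore's condition, Ore's theorem does not apply directly; instead I would pass to the Bondy--Chv\'atal closure, where a non-edge $uv$ survives only if the closure-degrees of $u$ and $v$ stay summed to $n-1$, i.e. no edges are ever added at $u$ or $v$. Analysing such a ``stuck'' set of non-edges, all of whose endpoints have degree about $(n-1)/2$ and are mutually non-adjacent, and feeding its size back into the budget $m \leq n-2$ together with the triangle condition, should force $n \leq 5$ and isolate $K_2 \vee \overline{K_3}$ as the unique Hamiltonicity failure. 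Verifying this pin-down carefully (and checking the finitely many small $n$ left over from the counting step) is where the real work lies; once done, the three listed graphs are exactly the exceptions and $G$ is fully cycle extendible otherwise.
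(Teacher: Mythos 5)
The paper does not actually prove this statement---it is quoted as a known theorem of Hendry \cite{H90}---so there is no in-paper argument to compare yours against, and I assess your proposal on its own terms. Most of it is sound and checkable: the complement budget $m \le n-2$, the pairwise degree-sum bound, the triangle dichotomy producing $K_1 \vee (K_1 \cup K_{n-2})$ and $\overline{K_2} \vee (K_1 \cup K_{n-3})$, and the insertion count $(n-\ell)\lceil \ell/2 \rceil \le n-2$, which indeed is violated for all $3 \le \ell \le n-3$ (one has $(n-\ell)\lceil \ell/2 \rceil \ge 3(n-3)/2 > n-2$ for $n \ge 6$, and the range is empty for $n \le 5$) and for $\ell = n-2$ with $n$ odd; your resolution of the equality case $\ell = n-2$, $n$ even, by rerouting a Hamiltonian path of the clique $V(C)$ through an outside vertex is also correct.

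The genuine gap is exactly where you flag it: the case $\ell = n-1$, i.e.\ showing that under $m \le n-2$ and the triangle condition, non-Hamiltonicity forces $G \cong K_2 \vee \overline{K_3}$. This is the crux of the theorem (it is the only place that exception can arise), and your sketch of it is not only incomplete but partly wrong: endpoints of a non-edge surviving in the Bondy--Chv\'atal closure need not have degree near $(n-1)/2$ (in $K_1 \vee (K_1 \cup K_{n-2})$ the surviving non-edges have endpoint degrees $1$ and $n-2$), and endpoints of distinct surviving non-edges need not be mutually non-adjacent (in that same graph the non-edges form a star whose leaves are pairwise adjacent), so the ``stuck set'' analysis as you describe it would not go through. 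The correct finish is short. Let $H$ be the closure of $G$; if $G$ is not Hamiltonian then $H \neq K_n$, and every non-edge $uv$ of $H$ satisfies $d_H(u) + d_H(v) = n-1$ exactly (at most $n-1$ since $uv$ was never added, at least $n-1$ by your counting bound applied to $H$, which also has at most $n-2$ non-edges). Then the number of non-edges of $H$ meeting $\{u,v\}$ equals $(n-1-d_H(u)) + (n-1-d_H(v)) - 1 = n-2$, which is at least the total number of non-edges of $H$; hence the total is exactly $n-2$ and \emph{every} non-edge of $H$ meets $\{u,v\}$. Since $uv$ was an arbitrary non-edge, the edges of the complement $\overline{H}$ pairwise intersect, and a graph with no two disjoint edges is a star or a triangle. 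In the star case $\overline{H} = K_{1,n-2}$, so $H = K_1 \vee (K_1 \cup K_{n-2})$; since $|E(H)| = \binom{n-1}{2}+1 \le |E(G)|$ and $G \subseteq H$, this forces $G = H$, contradicting your standing assumption that every vertex lies in a triangle. In the triangle case $3 = n-2$, so $n = 5$ and $G = H = K_2 \vee \overline{K_3}$. Substituting this for your closure paragraph completes the proof.
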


One can further consider this extremal problem by imposing extra conditions on the graph.  Let $f_E(n)$ denote the minimum number of edges required to guarantee that an $n$-vertex Hamiltonian graph is cycle extendable, and let $g_E(n)$ denote the minimum number of edges required to guarantee that an $n$-vertex Hamiltonian chordal graph is cycle extendable.

\begin{prob}
Determine upper and lower bounds on $f_E(n)$ and $g_E(n)$.
\end{prob}

It is easy to see that there exist counterexamples to Hendry's Conjecture with high density.  Let $D_n \in \mathcal{H}_3$ be the $n$-vertex graph obtained from $G_3$ where the five cliques pasted consist of four copies of $K_3$ and one copy of $K_{n-12}$.  Since $|E(D_n)| = {n-12 \choose 2} + 37$, we have that $\frac{n^2 - 25n + 230}{2} < f_E(n) \leq g_E(n)$, but this is almost certainly not the best possible lower bound.

\acknowledgements

This work was completed while the first author was affiliated with the Department d'informatique et de recherche op\'erationnelle, Universit\'e de Montr\'eal, Canada and the third author was affiliated with the Department of Computer Engineering, Sharif University of Technology, Tehran, Iran.  We thank the anonymous referees for the feedback, particularly regarding the presentation of Section \ref{treestructure}.

\bibliographystyle{siamplain}

\end{document}